
\documentclass[12pt]{amsart}

\usepackage{amsfonts}
\usepackage{amsmath}
\usepackage{amssymb}
\usepackage{amsthm}
\usepackage{graphicx}
\usepackage{color}
\usepackage{epsfig}
\usepackage{url}
\usepackage{mathabx}
\usepackage{placeins}
\usepackage{enumitem}
\usepackage{multicol}

\setlength{\oddsidemargin}{0.1cm}   
\setlength{\evensidemargin}{-0.1cm}  
\setlength{\topmargin}{-1.2cm}  
\setlength{\textwidth}{16.cm} 
\setlength{\textheight}{25.cm}
 
\numberwithin{equation}{section}
\theoremstyle{plain}
\newtheorem{thm}{Theorem}[section]
\newtheorem{rem}{Remark}[section]
\newtheorem{prop}{Proposition}[section]

\newtheorem{cor}{Corollary}[section]
\newtheorem{lem}{Lemma}[section]

\newcommand{\dE}{\mathbb{E}}
\newcommand{\dR}{\mathbb{R}}

\newcommand{\dP}{\mathbb{P}}

\newcommand{\cO}{\mathcal{O}}
\newcommand{\cL}{\mathcal{L}}
\newcommand{\cN}{\mathcal{N}}

\newcommand{\cM}{\mathcal{M}}

\newcommand{\wh}{\widehat}

\newcommand{\wt}{\widetilde}
\newcommand{\wc}{\widecheck}

\newcommand{\indicatrice}{\mathchoice{\rm 1\mskip-4mu l}{\rm 1\mskip-4mu l}{\rm 1\mskip-4.5mu l} {\rm 1\mskip-5mu l}}

\makeatletter    

\newcommand{\unnumberedcaption}%
	{\@dblarg{\@unnumberedcaption\@captype}}

\newcommand{\@unnumberedcaption}{}
\long\def\@unnumberedcaption#1[#2]#3{\par
  \addcontentsline{\csname ext@#1\endcsname}{#1}{%
    \protect\numberline{}{\ignorespaces #2}%
    }%
  \begingroup
    \@parboxrestore
    \normalsize
    \@makeunnumberedcaption{\ignorespaces #3}\par
  \endgroup}

\newcommand{\@makeunnumberedcaption}[1]{%
  \vskip\abovecaptionskip
  \sbox\@tempboxa{#1}%
  \ifdim \wd\@tempboxa >\hsize
    #1\par
  \else
    \global \@minipagefalse
    \hbox to\hsize{\hfil\box\@tempboxa\hfil}%
  \fi
  \vskip\belowcaptionskip}

\@ifundefined{abovecaptionskip}{%
  \newlength{\abovecaptionskip}%
  \setlength{\abovecaptionskip}{10pt}%
}{}
\@ifundefined{belowcaptionskip}{%
  \newlength{\belowcaptionskip}%
  \setlength{\belowcaptionskip}{0pt}%
}{}

\makeatother    




\begin{document}
\title[WLSE for the subcritical Heston process]
{Weighted least squares estimation for the subcritical Heston process \vspace{1ex}}
\author{Marie du ROY de CHAUMARAY}
\address{Universit\'e de Bordeaux, Institut de Math\'{e}matiques de Bordeaux,
UMR 5251, 351 Cours de la Lib\'{e}ration, 33405 Talence cedex, France.}

\begin{abstract}
We simultaneously estimate the four parameters of a subcritical Heston process. We do not restrict ourself to the case where the stochastic volatility process never reaches zero. In order to avoid the use of unmanageable stopping times and natural but intractable estimator, we propose to make use of a weighted least squares estimator. We establish strong consistency and asymptotic normality for this estimator. Numerical simulations are also provided, illustrating the good performances of our estimation procedure.
\end{abstract}

\maketitle

\section{Introduction}
Introduced in 1973, as an hedging tool, the Black-Scholes model uses a geometric Brownian motion to represent asset prices. The implied volatility is supposed to be constant over time, which turned out to be inaccurate to fit real market data, especially during the crash in 1987, see \cite{Stein}. Several alternative models have been constructed to take into account the so-called smile effect associated to deep in-the-money or out-of-money options. A particular attention has been drawn to the study of stochastic volatility processes in which the volatility is also given as a solution of some stochastic differential equation, see \cite{SteinStein}, \cite{Lewis} and \cite{Gath} for financial accuracy. Among them, Heston process \cite{hest} is one of the most popular, due to its computational tractability. For example, \cite{Lee} easily computes call option prices using Fourier inversion techniques. Numerous results about the asymptotic volatility smile can be found in the very recent literature: see e.g. \cite{J1}, \cite{J2}, \cite{J3}.
 
 We denote by $Y_t$ the logarithm of the price of a given asset and by $X_t$ its instantaneous variance, and we consider the following Heston process
\begin{equation} \label{heston}
\left\lbrace 
\begin{array}{c @{\, = \, } l}
    \mathrm{d}X_t & (a+b X_t) \, \mathrm{d}t + 2 \sqrt{X_t}\, \mathrm{d}B_t  \\
    \mathrm{d}Y_t & (\alpha+\beta X_t) \, \mathrm{d}t + 2 \sqrt{X_t}\, \left( \rho \,\mathrm{d}B_t + \sqrt{1-\rho^2} \, \mathrm{d}W_t \right) \\
\end{array}
\right.
\end{equation}
with $a >0$, $\left(b, \alpha, \beta \right) \in \dR^3$ and $\rho \in ]-1,1[$, where $\left(B_t,W_t\right)$ is a 2-dimensional standard Wiener process and the initial state $\left(x_0,y_0\right) \in \dR^{+} \times \dR$.
In this process, the volatility $X_t$ is driven by a generalized squared radial Ornstein-Uhlenbeck process, also known as the CIR process, firstly studied by Feller \cite{fel} and introduced in a financial context by Cox, Ingersoll and Ross \cite{CIR} to compute short-term interest rates.
The asymptotic behavior of this process has been widely investigated and depends on the values of both coefficients $a$ and $b$. 

Once a model has been chosen for its realistic features, it needs to be calibrated before being used for pricing. Our goal in this paper is to estimate parameters $(a,b,\alpha,\beta)$ at the same time using a trajectory of $(X_t)$ and $(Y_t)$ over the time interval $[0,T]$. Azencott and Gadhyan \cite{az} developed an algorithm to estimate some parameters of the Heston process based on discrete time observations, by making use of Euler and Milstein discretization schemes for the maximum likelihood. However, in the special case of an Heston process, the exact likelihood can be computed. It allows us to construct the maximum likelihood estimator (MLE) without using sophisticated approximation methods, which is necessary for many stochastic volatility models, see \cite{AitK}. The MLE of $(a,b,\alpha,\beta)$  has been recently investigated in \cite{pap2}, together with its asymptotic behavior in the special case where $a \geq 2$. 
Denote by $\tau_0$ the stopping-time given by 
\begin{equation}\label{tau0}
\tau_0=\inf \left\lbrace T > 0 \left| \, \int_{0}^T{X_t^{-1} \, \mathrm{d}t}= \infty \right. \right\rbrace.
\end{equation}
For any $a>0$, the MLE $\wt{\theta}_T=\left(\wt{a}_T, \wt{b}_T, \wt{\alpha}_T,
\wt{\beta}_T\right)$ is given, for $T < \tau_0$, by:
\begin{equation}\label{struc}
\wt{\theta}_T=
\begin{pmatrix}
G_T^{-1} & 0\\
0 & G_T^{-1}
\end{pmatrix} \begin{pmatrix}
\wt{U}_T\\
\wt{V}_T
\end{pmatrix}
\end{equation}
where $\wt{U}_T= \left(\int_0^T{X_t^{-1} \, \mathrm{d}X_t},
\int_0^T{X_t \, \mathrm{d}X_t}\right)^{\intercal}$, $\wt{V}_T=\left(\int_0^T{X_t^{-1} \, \mathrm{d}Y_t},
\int_0^T{X_t \, \mathrm{d}Y_t}\right)^{\intercal}$ and
\begin{equation*}
G_T= \begin{pmatrix}
\int_0^T{X_t^{-1} \, \mathrm{d}t} & T \\
T & \int_0^T{X_t \, \mathrm{d}t}
\end{pmatrix}.
\end{equation*}
One can observe that $(\wt{a}_T,\wt{b}_T)$ coincides with the MLE of the parameters $(a,b)$ of the CIR process based on the observation of $(X_T)$ over the time interval $\left[0,T\right]$.
The asymptotic behavior of this latter estimator is well-known, see for example \cite{FT}, \cite{Ov} and \cite{KAB2}. In the supercritical case $b>0$, Overbeck \cite{Ov} has shown that $\wt{b}_T$ converges a.s. to $b$ whereas there exists no consistent estimator for $a$. Hence, we will focus our attention on the geometrically ergodic case $b<0$. Furthermore, the value of $a$ governs the behavior at zero of $(X_T)$: for $a \geq2$, the process almost surely never reaches zero , whereas for $0<a<2$, zero is quite frequently visited and \begin{equation}\label{p0}
\dP\left(\tau_0< \infty\right)=1,
\end{equation}
 see for instance \cite{LL} or \cite{Ov}. For $a>2$, the MLE  converges a.s. to $\theta=\left(a,b,\alpha,\beta\right)$ and satisfies the following Central Limit Theorem (CLT)
$$\sqrt{T}\begin{pmatrix}\wt{\theta}_T-\theta \end{pmatrix} \xrightarrow{\mathcal{L}} \mathcal{N}(0,4D^{-1}) $$
where the block matrix $D$ is given by
$$ D= \begin{pmatrix}
\Sigma & \rho \Sigma\\
\rho \Sigma & \Sigma
\end{pmatrix} \hspace{1cm} \text{with} \hspace{1cm}  \Sigma= \begin{pmatrix}
 \frac{-b}{a-2} & 1 \\
 1 & -\frac{a}{b}
 \end{pmatrix}.$$
 A large deviation principle for the couple $(\wt{a}_T,\wt{b}_T)$ was recently established in \cite{dRdC1}. In the particular case where one parameter is known and the other one is estimated, large deviations can be found in \cite{Z}, while moderate deviations are given in \cite{GJ2}. 
 
By contrast, in the case where $0<a<2$, \eqref{p0} implies the non-integrability of $X_T^{-1}$ for large values of $T$ so that the MLE does not converge for $T$ going to infinity. Consequently, this case has been less investigated even though it is often of interest in finance, to compute long dated interest rates for instance, as explained in \cite{And}, or in FX-markets, see \cite{jan}.  
   In the case of the CIR process, Overbeck \cite{Ov} used accurate stopping times to build a strongly consistent estimator based on the MLE: 
   \begin{equation}
   \indicatrice_{T < \tau_0} \begin{pmatrix}
   \wt{a}_T \\
   \wt{b}_T
   \end{pmatrix} + \indicatrice_{\tau_0 \leq T} \begin{pmatrix}
   \underset{t \to \tau_0}{\lim}{S_t \Sigma_t^{-1}} \\
   \left(\int_0^T{X_s \, \mathrm{d}s}\right)^{-1} \left(X_T-T\underset{t \to \tau_0}{\lim}{S_t \Sigma_t^{-1}} \right)
   \end{pmatrix}
   \end{equation}
   where $S_t=\int_0^t{X_s^{-1} \, \mathrm{d}X_s}$, $\Sigma_t=\int_0^t{X_s^{-1} \, \mathrm{d}s}$ and $\tau_0$ is given by \eqref{p0}. 
    The aim of this paper is to investigate a new strongly consistent weighted least squares estimator (WLSE) for the quadruplet of parameters $\theta$ (and for $\left(a,b\right)$ as a consequence). The weighting allows us to circumvent the explosion for $X_T$ reaching zero and consequently avoid us to make use of stopping times, which are not easy to handle in practice. It generalizes to continuous time the original work of Wei and Winnicki \cite{Wei} for branching processes with immigration, inspired by an analogy with first order autoregressive processes. Our results answer, by the way, the question of Ben Alaya and Kebaier in the conclusion of \cite{KAB2} regarding the CIR process.

 Following the seminal work of \cite{Wei}, denote $C_T=X_T+c$ where $c$ is some positive constant. Our new couple of weighted least squares estimator is given by
\begin{equation}\label{Est}
\wh{\theta}_T = \begin{pmatrix}
\Gamma_T^{-1} & 0\\
0 & \Gamma_T^{-1}
\end{pmatrix} \begin{pmatrix}
U_T\\
V_T
\end{pmatrix}
\end{equation} 
 where $U_T=\left(\int_0^T{\frac{1}{C_t} \, \mathrm{d}X_t},
\int_0^T{\frac{X_t}{C_t} \, \mathrm{d}X_t}\right)^{\intercal}$, $V_T=\left(\int_0^T{\frac{1}{C_t} \, \mathrm{d}Y_t},
\int_0^T{\frac{X_t}{C_t} \, \mathrm{d}Y_t}\right)^{\intercal}$
and
\begin{equation*}
\Gamma_T= \begin{pmatrix}
\int_0^T{\frac{1}{C_t} \, \mathrm{d}t} & \int_0^T{\frac{X_t}{C_t} \, \mathrm{d}t}\\
\int_0^T{\frac{X_t}{C_t} \, \mathrm{d}t} & \int_0^T{\frac{X_t^2}{C_t} \, \mathrm{d}t}
\end{pmatrix}.
\end{equation*} 
We do not restrict ourselves to the case where $c=1$ as it may lower sometimes the variance of the estimators. 
In the particular case where $c=0$, one can observe that the new estimator  coincides with the MLE. 

The paper is organized as follows. The second section contains our main results: the strong consistency of this new couple of estimators as well as its asymptotic normality. The third section deals with a comparison with the MLE,
while the remaining of the paper is devoted to the proofs of our main results, as well as their illustration by some numerical simulations.

\section{Main results}
Our main results are as follows.
\begin{thm}\label{T1}
Assume that $a>0$ and $b<0$. Then, the four-dimensional WLSE $\wh{\theta}_T$ is strongly consistent: for $T$ going to infinity,
\begin{equation}\label{cvps_hest} 
\wh{\theta}_T \xrightarrow{a.s.} \theta.
\end{equation}
\end{thm}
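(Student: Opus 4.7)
\bigskip
\noindent\textbf{Proof plan.} The strategy is to decompose $\wh\theta_T-\theta$ as a block-diagonal matrix times a martingale, then show that, on dividing by $T$, the matrix converges to an invertible limit while the martingale vanishes. This follows the Wei--Winnicki line of attack used for branching processes with immigration.

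First, substituting the dynamics \eqref{heston} into the definitions of $U_T$ and $V_T$ gives
\[
U_T \;=\; \Gamma_T \begin{pmatrix} a \\ b \end{pmatrix} + M_T^X, \qquad V_T \;=\; \Gamma_T \begin{pmatrix} \alpha \\ \beta \end{pmatrix} + M_T^Y,
\]
where
\[
M_T^X \;=\; \begin{pmatrix} \int_0^T \frac{2\sqrt{X_t}}{C_t}\,\dd B_t \\[4pt] \int_0^T \frac{2X_t\sqrt{X_t}}{C_t}\,\dd B_t \end{pmatrix}
\]
and $M_T^Y$ is the analogous vector with $\dd B_t$ replaced by $\rho\,\dd B_t+\sqrt{1-\rho^2}\,\dd W_t$. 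Hence
\[
\wh\theta_T - \theta \;=\; \begin{pmatrix} (\Gamma_T/T)^{-1} & 0 \\ 0 & (\Gamma_T/T)^{-1} \end{pmatrix} \begin{pmatrix} M_T^X/T \\ M_T^Y/T \end{pmatrix},
\]
and it suffices to prove (i) $\Gamma_T/T \to \Gamma_\infty$ a.s.\ for some invertible $\Gamma_\infty$, and (ii) each coordinate of $M_T^X$ and $M_T^Y$ is $o(T)$ a.s.

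For (i), I would invoke the ergodicity of the subcritical CIR process: for $b<0$ the diffusion $(X_t)$ is positive recurrent with a unique invariant law $\pi$ (a gamma distribution, see \cite{Ov}) and $T^{-1}\int_0^T f(X_t)\,\dd t \to \dE_\pi[f(X)]$ a.s.\ for every $f\in L^1(\pi)$. The point of taking $c>0$ is that the three integrands $\frac{1}{x+c}$, $\frac{x}{x+c}$, $\frac{x^2}{x+c}$ are dominated respectively by $\frac{1}{c}$, $1$ and $x$, so each of them is $\pi$-integrable even in the delicate regime $0<a<2$ where $1/x\notin L^1(\pi)$. Thus $\Gamma_T/T \to \Gamma_\infty$ a.s., where $\Gamma_\infty$ is the symmetric matrix whose entries are these three $\pi$-expectations. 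Its invertibility follows from Cauchy--Schwarz applied to the finite positive measure $\dd\mu:=(x+c)^{-1}\,\dd\pi(x)$,
\[
\dE_\pi\!\left[\tfrac{X}{X+c}\right]^2 \;=\; \left(\int x\,\dd\mu\right)^2 \;<\; \int x^2\,\dd\mu \cdot \int \dd\mu \;=\; \dE_\pi\!\left[\tfrac{X^2}{X+c}\right]\dE_\pi\!\left[\tfrac{1}{X+c}\right],
\]
the strict inequality holding because $X$ is nondegenerate under $\pi$.

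For (ii), each coordinate of $M_T^X$ and $M_T^Y$ is a continuous martingale with quadratic variation of the form $\int_0^T 4X_t^{2k+1}/C_t^2\,\dd t$ for $k\in\{0,1\}$, whose integrand is dominated by $4/c$ and by $4X$, both $\pi$-integrable. The same ergodic argument forces these brackets to grow linearly in $T$ a.s., and the strong law of large numbers for continuous martingales then gives $M_T^X/T\to 0$ and $M_T^Y/T\to 0$ a.s. Combined with (i), this yields $\wh\theta_T\to\theta$ a.s. The genuine difficulty, and the whole raison d'\^etre of the weighting, is that for $0<a<2$ the MLE decomposition \eqref{struc} breaks down globally because of \eqref{p0} and the non $\pi$-integrability of $1/x$; the insertion of $C_t=X_t+c$ must simultaneously regularise every singular denominator in the score equations, preserve the clean linear identities $U_T=\Gamma_T(a,b)^\intercal+M_T^X$ displayed above, and keep $\Gamma_\infty$ invertible for every admissible $(a,b,c)$, and it is the joint verification of these three properties that constitutes the substance of the proof.
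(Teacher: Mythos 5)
Your proposal is correct and follows essentially the same route as the paper: the identical martingale decomposition $\wh\theta_T-\theta=\mathrm{diag}(\Gamma_T^{-1},\Gamma_T^{-1})(M_T,N_T)^{\intercal}$, the ergodic theorem for the subcritical CIR process to get $T\Gamma_T^{-1}\to A$, and the strong law for continuous martingales applied to brackets growing linearly in $T$. The only difference is that you spell out the $\pi$-integrability of the weighted integrands and the invertibility of the limit matrix via Cauchy--Schwarz, details the paper leaves implicit (it computes the limits explicitly in Lemma 4.1 instead).
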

For $T$ going to infinity, $X_T$ converges in distribution to a random variable $X$ with Gamma  $\Gamma(a/2,-b/2)$ distribution, see Lemma 3 of \cite{Ov} for instance. Additionally, we denote by $C$ the limiting distribution of $X_T+c$, as $T$ goes to infinity.

\begin{thm}\label{T2}
Assume that $a>0$ and $b<0$. Then, for $T$ going to infinity, the estimator $\wh{\theta}_T$ satisfies the following CLT 
\begin{equation}\label{TCL_hest}
\displaystyle \sqrt{T} \left(\wh{\theta}_T-\theta\right) \xrightarrow{\cL}\cN\left(0,4\Lambda\right),
\end{equation}
where the asymptotic variance $\Lambda$ is defined as a block matrix by
\begin{equation}
\Lambda=\begin{pmatrix}
ALA & \rho ALA \\
\rho ALA & ALA
\end{pmatrix},
\end{equation}
with the matrix $A$ and $L$ respectively given by \begin{equation*}
A=\left(\dE \left[ C\right]\dE \left[ 1/C\right]-1\right)^{-1}\begin{pmatrix}
 \dE \left[ X^2/C\right] & -\dE \left[ X/C\right] \\
 -\dE \left[ X/C\right] & \dE \left[ 1/C\right]
\end{pmatrix}
\end{equation*} 
and \begin{equation*}
L=\begin{pmatrix}
\dE \left[ X/C^2\right] & \dE \left[ X^2/C^2\right] \\
\dE \left[ X^2/C^2\right] & \dE \left[ X^3/C^2\right]
\end{pmatrix}.
\end{equation*}
\end{thm}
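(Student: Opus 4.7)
The plan is to express $\sqrt{T}(\wh\theta_T-\theta)$ as the product of an ergodic deterministic matrix and a rescaled continuous martingale, then combine ergodicity with the central limit theorem for continuous martingales via Slutsky's lemma. Plugging the SDE \eqref{heston} into the defining integrals of $U_T$ and $V_T$ yields the decomposition
\begin{equation*}
\begin{pmatrix} U_T \\ V_T \end{pmatrix} = \begin{pmatrix} \Gamma_T & 0 \\ 0 & \Gamma_T \end{pmatrix} \theta + 2 \begin{pmatrix} M_T^{(1)} \\ M_T^{(2)} \end{pmatrix},
\end{equation*}
where
\begin{equation*}
M_T^{(1)} = \begin{pmatrix} \int_0^T \frac{\sqrt{X_t}}{C_t}\, \dd B_t \\[2pt] \int_0^T \frac{X_t\sqrt{X_t}}{C_t}\, \dd B_t \end{pmatrix}, \qquad M_T^{(2)} = \begin{pmatrix} \int_0^T \frac{\sqrt{X_t}}{C_t}\, \dd Z_t \\[2pt] \int_0^T \frac{X_t\sqrt{X_t}}{C_t}\, \dd Z_t \end{pmatrix},
\end{equation*}
and $Z_t := \rho B_t + \sqrt{1-\rho^2}\, W_t$ is a standard Brownian motion with $\langle B,Z\rangle_t=\rho t$. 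Consequently,
\begin{equation*}
\sqrt{T}(\wh\theta_T-\theta) = 2\begin{pmatrix}(\Gamma_T/T)^{-1} & 0\\0 & (\Gamma_T/T)^{-1}\end{pmatrix}\begin{pmatrix}M_T^{(1)}/\sqrt{T}\\M_T^{(2)}/\sqrt{T}\end{pmatrix}.
\end{equation*}

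Under $b<0$, $(X_t)$ is positively recurrent with unique invariant law $\Gamma(a/2,-b/2)$, which admits moments of all orders. Since $C_t\geq c>0$, every integrand appearing in $\Gamma_T$ and in the quadratic variations of $M^{(1)}$ and $M^{(2)}$ is dominated by a polynomial in $X_t$, hence integrable under the stationary measure. The ergodic theorem then gives $\Gamma_T/T \to A^{-1}$ almost surely: using the identity $X/C = 1-c/C$, the determinant of this limit simplifies to $\dE[C]\dE[1/C]-1$, which is positive by Cauchy--Schwarz and matches the normalization of the matrix $A$ in the statement. Similarly, It\^o isometry combined with $\langle B,Z\rangle_t=\rho t$ yields, for $i,j\in\{1,2\}$,
\begin{equation*}
\langle M^{(1)}_i,M^{(1)}_j\rangle_T = \langle M^{(2)}_i,M^{(2)}_j\rangle_T = \int_0^T \frac{X_t^{i+j-1}}{C_t^2}\,\dd t, \qquad \langle M^{(1)}_i,M^{(2)}_j\rangle_T = \rho\int_0^T \frac{X_t^{i+j-1}}{C_t^2}\,\dd t,
\end{equation*}
so that $T^{-1}\langle(M^{(1)},M^{(2)})\rangle_T$ converges almost surely to the block matrix with diagonal blocks $L$ and off-diagonal blocks $\rho L$.

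The central limit theorem for continuous local martingales with an ergodic normalized bracket then delivers
\begin{equation*}
\frac{1}{\sqrt{T}}\begin{pmatrix}M_T^{(1)}\\M_T^{(2)}\end{pmatrix} \xrightarrow{\cL} \cN\left(0,\begin{pmatrix}L & \rho L\\\rho L & L\end{pmatrix}\right),
\end{equation*}
and Slutsky's lemma combined with the ergodic limit of the previous paragraph yields $\sqrt{T}(\wh\theta_T-\theta)\xrightarrow{\cL}\cN(0,4\Lambda)$, the covariance being obtained by symmetry of $A$ from
\begin{equation*}
\begin{pmatrix}A & 0\\0 & A\end{pmatrix}\begin{pmatrix}L & \rho L\\\rho L & L\end{pmatrix}\begin{pmatrix}A & 0\\0 & A\end{pmatrix} = \begin{pmatrix}ALA & \rho ALA\\\rho ALA & ALA\end{pmatrix} = \Lambda.
\end{equation*}
The main technical point is that when $0<a<2$ the volatility $X_t$ hits zero almost surely, which is precisely what breaks the MLE analysis of \cite{pap2}; here the regularization by $c>0$ removes this singularity, as each integrand is bounded by a polynomial in $X_t$ and therefore integrable under the stationary law. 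The ergodic theorem and the martingale CLT therefore apply uniformly in $a>0$, and the Lindeberg condition required for the martingale CLT reduces to polynomial moment bounds under the invariant law and is immediate.
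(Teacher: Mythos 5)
Your proposal is correct and follows essentially the same route as the paper: plug the SDE into $U_T$ and $V_T$ to isolate a continuous martingale, use ergodicity of the CIR process to get $T\Gamma_T^{-1}\to A$ and the normalized bracket $\to$ the block matrix built from $L$ and $\rho L$, then apply the martingale CLT and Slutsky's lemma. The only cosmetic difference is that you factor the $2$ out of the martingales rather than keeping it inside, and you supply slightly more explicit justification (the determinant identity via $X/C=1-c/C$ and the positivity from Cauchy--Schwarz) than the paper does.
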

We deduce from the previous theorems the following result for the MLE of the two parameters of the CIR process $\left(X_T\right)$.
\begin{cor}
Assuming that $a>0$ and $b<0$, the WLSE $(\wh{a}_T,\wh{b}_T)$ of parameters $(a,b)$ is strongly consistent for $T$ going to infinity, \begin{equation*}\label{cvps} 
\begin{pmatrix}
\wh{a}_T\\
\wh{b}_T
\end{pmatrix} \xrightarrow{a.s.} \begin{pmatrix}
a\\
b
\end{pmatrix}.
\end{equation*}
and satisfies the following CLT \begin{equation*}\label{TCL}
\displaystyle \sqrt{T} \begin{pmatrix}
\wh{a}_T-a\\
\wh{b}_T-b
\end{pmatrix} \xrightarrow{\cL}\cN\left(0,4A L A\right),
\end{equation*}
\end{cor}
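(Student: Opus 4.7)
The plan is to derive the corollary directly from Theorems \ref{T1} and \ref{T2} by extracting the first two coordinates of the four-dimensional WLSE. A glance at the definition \eqref{Est} shows that the pair $(\wh{a}_T,\wh{b}_T)$ equals $\Gamma_T^{-1} U_T$, and that both $\Gamma_T$ and $U_T$ only involve the volatility trajectory $(X_t)_{0\le t\le T}$. Consequently $(\wh{a}_T,\wh{b}_T)$ is genuinely a weighted least squares estimator for the CIR subprocess, whose asymptotic behaviour can simply be read off from the corresponding block of the four-dimensional result.

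For strong consistency, Theorem \ref{T1} gives $\wh{\theta}_T\to\theta=(a,b,\alpha,\beta)$ almost surely as $T\to\infty$, so projecting onto the first two coordinates yields $(\wh{a}_T,\wh{b}_T)\to(a,b)$ a.s. For the CLT, Theorem \ref{T2} asserts that $\sqrt{T}(\wh{\theta}_T-\theta)$ converges in distribution to $\cN(0,4\Lambda)$, where the block matrix $\Lambda$ has upper-left $2\times 2$ block equal to $ALA$. Since any linear projection of a Gaussian vector is Gaussian, applying the coordinate projection $(x_1,x_2,x_3,x_4)\mapsto (x_1,x_2)$ to both sides gives
\begin{equation*}
\sqrt{T}\begin{pmatrix} \wh{a}_T-a \\ \wh{b}_T-b \end{pmatrix} \xrightarrow{\cL} \cN(0,4ALA),
\end{equation*}
which is the announced central limit theorem.

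There is no real obstacle here: the estimator $\wh{\theta}_T$ was constructed precisely so that its first two components do not involve the log-price $(Y_t)$ and coincide with the natural WLSE for the parameters $(a,b)$ of the CIR diffusion. All the substantive work—studying the matrix $\Gamma_T$, establishing the ergodic convergence of the averages appearing in its entries (thanks to the stationary distribution $\Gamma(a/2,-b/2)$ of $X$), and applying a martingale CLT to control the noise terms—has already been carried out in the proofs of Theorems \ref{T1} and \ref{T2}. The corollary is therefore an immediate consequence.
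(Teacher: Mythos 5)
Your proposal is correct and matches the paper's treatment: the corollary is stated there as an immediate deduction from Theorems \ref{T1} and \ref{T2}, obtained exactly as you do by projecting onto the first two coordinates and reading off the upper-left $2\times 2$ block $ALA$ of the covariance matrix $\Lambda$. Your additional observation that $(\wh{a}_T,\wh{b}_T)=\Gamma_T^{-1}U_T$ depends only on the trajectory of $(X_t)$ is accurate and consistent with the paper's remark that this pair is the natural WLSE for the CIR subprocess.
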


\begin{rem}
In the remaining of this paper, we denote
\begin{equation}\label{defpsi}
\psi_c=\left(-\frac{bc}{2}\right)^{a/2} e^{-bc/2} \, \Gamma\left(1-a/2,-bc/2\right),
\end{equation}
where $\Gamma$ is the upper incomplete gamma function defined for all $y \in \dR$ and $\alpha \in\dR^{+}_{*}$ by
$$\Gamma(\alpha,y)= \int_y^{+\infty} e^{-t} t^{\alpha-1} \, \mathrm{d}t,$$
and extended, for $y \neq 0$, to any real $\alpha$ by holomorphy.
To simplify following expressions, we also define
\begin{equation}\label{defphi}
\varphi_c=\psi_c \left(1-\frac{a}{bc} \right) -1.
\end{equation}
 In the proof of Theorem ~\ref{T2}, we evaluate the two matrices $A$ and $L$ involving $c$ and we obtain that
\begin{equation}A=\varphi_c^{-1}\begin{pmatrix}
c \left(\psi_c-1 \right) -\frac{a}{b} & \psi_c-1 \\
\psi_c-1 & \frac{\psi_c}{c}\\
\end{pmatrix}
\end{equation} and
\begin{equation}L = \frac{1}{2} \begin{pmatrix}
\displaystyle \frac{a}{c} \, \psi_c + b \left(1-\psi_c\right)& \displaystyle \left(a+2-bc\right) \, \left(1-\psi_c\right) - a\\
\displaystyle \left(a+2-bc\right) \, \left(1-\psi_c\right) - a & \displaystyle \psi_c c\, \left(a+4-b\right) -4c-bc^2-\frac{2a}{b}
\end{pmatrix}.
\end{equation}
By a straightforward computation, we deduce that
$A L A=  \left(\varphi_c\right)^{-2}\begin{pmatrix} \sigma_{11} & \sigma_{12} \\
\sigma_{12} & \sigma_{22}
\end{pmatrix}$ where the variances $\sigma_{11}$ and $\sigma_{22}$ are respectively given by $$\sigma_{11}= \left(\psi_c-1\right)^2 \left(\frac{a}{b}-bc^2+\psi_c \frac{bc}{2}\left(c-1\right)\right)-\frac{a^2}{2b}\varphi_c$$ $$\sigma_{22}= \left(\left(\psi_c^2-1\right) \frac{b}{2} +\frac{\psi_c}{c}\right) \varphi_c+ \frac{\psi_c}{2c}\left(\psi_c^2\left(a-b\right)+\psi_c \left(2-bc\right)-2\right),$$ and the covariance $\sigma_{12}$ is given by 
$\sigma_{12}= \left(\psi_c-1\right)^2-\frac{a}{2}\varphi_c$.
\end{rem}
  
\begin{rem}
For $c$ going to zero (for which we need $a$ to be greater than $2$) , we obtain the same covariance matrix than for the MLE. Indeed, using well-known asymptotic results about the incomplete Gamma function $\Gamma$, which could be found in \cite{Luke2}, we have that, as soon as $a>2$,
\begin{equation}
\Gamma\left(1-a/2,-bc/2\right) \underset{c \to 0}{\longrightarrow} 0
\end{equation}
and
\begin{equation}
\Gamma\left(1-a/2,-bc/2\right) \left(-\frac{bc}{2}\right)^{a/2-1} \underset{c \to 0}{\longrightarrow} \frac{1}{1-a/2} = \frac{2}{a-2}.
\end{equation}
Thus $\psi_c$ goes to zero for $c$ tending to zero and $\frac{\psi_c}{c}$ converges to $\frac{-b}{a-2}$. Hence, we easily obtain that, for $c$ going to zero,
\begin{equation*}
A L A \underset{c \to 0}{\longrightarrow} \Sigma^{-1} \hspace{1cm} \text{ where }  \hspace{1cm} \Sigma= \begin{pmatrix}
 \frac{-b}{a-2} & 1 \\
 1 & -\frac{a}{b}
 \end{pmatrix}.
\end{equation*}

\end{rem}
 
\section{Asymptotic variance}
Even though we considered the weighted least squares estimators in order to investigate the case $0<a< 2$ for which the MLE is not consistent, it is interesting to compare the asymptotic variances in the CLT of this new estimators and of the MLE, in the case where $a>2$. This comparison requires a lot of technical calculation as the asymptotic variances depends on the value of $a$, $b$ and $c$. However, it is quite easy to compare variances for the MLE of the parameters of the CIR process in the case where we suppose one of the parameter to be known and we estimate the other one, as it simplifies substantially the expression of the estimators. On the one hand, if $a$ is known, the MLE for $b$ is given by
\begin{equation}\wc{b}_T=\frac{X_T-aT}{\int_0^T{X_t \, \mathrm{d}t}}
\end{equation}
and satisfies the following CLT
$$\sqrt{T}\left(\wc{b}_T-b\right)  \xrightarrow{\mathcal{L}} \mathcal{N}(0,4/\dE\left[X\right])$$ 
where $\dE\left[X\right]=-a/b$, see for instance \cite{Ov}. On the other hand, if $b$ is known, the MLE of $a$ is given by 
\begin{equation}
\wc{a}_T=\frac{\int_0^T{1/X_t \, \mathrm{d}X_t-bT}}{\int_0^T{1/X_t \, \mathrm{d}t}}
\end{equation}
and satisfies the following CLT
$$\sqrt{T}\left(\wc{a}_T-a\right)  \xrightarrow{\mathcal{L}} \mathcal{N}(0,4/\dE\left[X^{-1}\right])$$
with  $\dE\left[X^{-1}\right]=-b/(a-2)$.
Whereas, the weighted least squares estimators are respectively given by
\begin{equation*}\wh{b}_T=\frac{\int_0^T{\frac{X_t}{C_t} \, \mathrm{d}X_t-a \, \int_0^T{\frac{X_t}{C_t} \, \mathrm{d}t}}}{\int_0^T{\frac{X_t^2}{C_t} \, \mathrm{d}t}}
\hspace{1cm} \text{and} \hspace{1cm}\wh{a}_T=\frac{\int_0^T{\frac{1}{C_t} \, \mathrm{d}X_t-b \, \int_0^T{\frac{X_t}{C_t} \, \mathrm{d}t}}}{\int_0^T{\frac{1}{C_t} \, \mathrm{d}t}}.
\end{equation*}

\begin{prop}
Assume that $a>0$ and $b<0$. For $T$ going to infinity, $\wh{b}_T$ satisfies the following CLT:
\begin{equation}\label{P1}
\sqrt{T}\left(\wh{b}_T-b\right)  \xrightarrow{\mathcal{L}} \mathcal{N}\left(0,4 \, \dE\left[X^3/C^2\right]\left(\dE\left[X^2/C\right]\right)^{-2}\right).
\end{equation}
\end{prop}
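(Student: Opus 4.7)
The plan is to exploit the explicit algebraic form of $\widehat{b}_T - b$ obtained by substituting the SDE for $X$, which reduces the estimation error to a ratio of a stochastic integral (a martingale) over an ordinary time integral.

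First I would substitute $\dd X_t = (a + bX_t)\, \dd t + 2\sqrt{X_t}\, \dd B_t$ into the numerator of $\widehat{b}_T$. The drift term $a\int_0^T \frac{X_t}{C_t}\, \dd t$ cancels exactly the correction subtracted in the definition, and the term $b\int_0^T \frac{X_t^2}{C_t}\, \dd t$ yields, once divided by $\int_0^T \frac{X_t^2}{C_t}\, \dd t$, precisely $b$. What remains is
\begin{equation*}
\sqrt{T}\bigl(\widehat{b}_T - b\bigr) = \frac{\displaystyle \frac{2}{\sqrt{T}} \int_0^T \frac{X_t^{3/2}}{C_t}\, \dd B_t}{\displaystyle \frac{1}{T} \int_0^T \frac{X_t^2}{C_t}\, \dd t}.
\end{equation*}

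Next I would handle the denominator and numerator separately. For the denominator, since the CIR process $(X_t)$ with $a>0$ and $b<0$ is geometrically ergodic with invariant distribution $\Gamma(a/2,-b/2)$, the ergodic theorem gives
\begin{equation*}
\frac{1}{T} \int_0^T \frac{X_t^2}{C_t}\, \dd t \xrightarrow{a.s.} \dE\!\left[\frac{X^2}{C}\right],
\end{equation*}
provided this expectation is finite; but $X^2/C \leq X$ because $C = X + c \geq X$, so the bound is controlled by $\dE[X] = -a/b < \infty$. For the numerator, I would apply the martingale central limit theorem to the continuous local martingale $M_T = \int_0^T \frac{X_t^{3/2}}{C_t}\, \dd B_t$, whose quadratic variation is $\langle M\rangle_T = \int_0^T \frac{X_t^3}{C_t^2}\, \dd t$. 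Another application of the ergodic theorem yields
\begin{equation*}
\frac{1}{T}\langle M \rangle_T \xrightarrow{a.s.} \dE\!\left[\frac{X^3}{C^2}\right],
\end{equation*}
and the required moment is finite because $X^3/C^2 \leq X$ as well. The martingale CLT (in its continuous-time Dambis–Dubins–Schwarz form or via standard results for ergodic diffusions) then gives $M_T/\sqrt{T} \xrightarrow{\cL} \cN(0, \dE[X^3/C^2])$.

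Finally, combining these two limits via Slutsky's lemma delivers the announced asymptotic Gaussian distribution with variance $4\, \dE[X^3/C^2] \, (\dE[X^2/C])^{-2}$. The only nontrivial point I foresee is justifying the application of the ergodic theorem and the continuous-time martingale CLT for the CIR process: the underlying arguments (existence of the invariant measure, integrability of the relevant functions against it, almost sure convergence of the bracket) are by now standard in the setting of subcritical CIR, but should presumably be invoked or recalled from the references already used in the paper (\cite{Ov}, \cite{KAB2}), rather than reproved from scratch.
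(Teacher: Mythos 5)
Your proposal is correct and follows essentially the same route as the paper: substitute the SDE to isolate the martingale $\int_0^T X_t^{3/2}C_t^{-1}\,\mathrm{d}B_t$, apply the ergodic theorem to its bracket and to the denominator, invoke the martingale CLT, and conclude by Slutsky's lemma. Your explicit domination bounds $X^2/C\leq X$ and $X^3/C^2\leq X$ justifying the integrability are a small but welcome addition that the paper leaves implicit.
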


\begin{proof} Replacing $\mathrm{d}X_t$ by its expression (\ref{heston}), we easily get that
$$\sqrt{T}\left(\wh{b}_T-b\right) = 2 \left(\frac{1}{T}\int_0^T{\frac{X_t^2}{C_t} \, \mathrm{d}t}\right)^{-1} \frac{n_T}{\sqrt{T}}$$
where $n_T$ is a martingale given by
\begin{equation*}\label{n}n_T=\int_0^T \frac{X_t\sqrt{X_t}}{C_t} \, \mathrm{d}B_t
\hspace{1cm} \text{and} \hspace{1cm}
\left\langle n\right\rangle_T= \int_0^T \frac{X_t^3}{C_t^2} \, \mathrm{d}t.
\end{equation*}
 Using the ergodicity of the process, we obtain for $T$ going to infinity
\begin{equation}\label{nc}
\frac{\left\langle n\right\rangle_T}{T} \xrightarrow{a.s.} \dE\left[X^3/C^2\right].
\end{equation}
Thus, by the CLT for martingales, we obtain the following convergence in distribution
\begin{equation}\label{loin} \frac{ n_T}{\sqrt{T}} \xrightarrow{\mathcal{L}} \mathcal{N}\left(0,\dE\left[X^3/C^2\right]\right).
\end{equation}
Consequently, \eqref{P1} follows from \eqref{loin}, Slutsky's lemma and the fact that, by the ergodicity of the process, $\frac{1}{T}\int_0^T{X_t^2/C_t \, \mathrm{d}t}$ converges a.s. to $\dE\left[X^2/C\right]$ for $T$ going to infinity.
\end{proof}

\begin{prop}
Assume that $a>0$ and $b<0$. For $T$ going to infinity, $\wh{a}_T$ satisfies the following CLT:
\begin{equation}\label{P2}
\sqrt{T}\left(\wh{a}_T-a\right)  \xrightarrow{\mathcal{L}} \mathcal{N}\left(0,4 \, \dE\left[X/C^2\right]\left(\dE\left[1/C\right]\right)^{-2}\right).
\end{equation}
\end{prop}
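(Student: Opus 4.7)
The plan is to mirror the structure of the previous proposition. First I would substitute the SDE satisfied by $X_t$, namely $\mathrm{d}X_t=(a+bX_t)\,\mathrm{d}t+2\sqrt{X_t}\,\mathrm{d}B_t$, into the numerator of $\wh{a}_T$. The deterministic part $a\int_0^T \frac{1}{C_t}\,\mathrm{d}t + b\int_0^T \frac{X_t}{C_t}\,\mathrm{d}t$ combines with the term $-b\int_0^T \frac{X_t}{C_t}\,\mathrm{d}t$ already present in the formula defining $\wh{a}_T$, so after dividing by $\int_0^T \frac{1}{C_t}\,\mathrm{d}t$ only the martingale piece survives. This should give the identity
\begin{equation*}
\sqrt{T}\left(\wh{a}_T-a\right) = 2\left(\frac{1}{T}\int_0^T\frac{1}{C_t}\,\mathrm{d}t\right)^{-1}\frac{m_T}{\sqrt{T}},
\end{equation*}
where $m_T=\int_0^T \frac{\sqrt{X_t}}{C_t}\,\mathrm{d}B_t$ is a continuous local martingale with bracket $\langle m\rangle_T = \int_0^T \frac{X_t}{C_t^2}\,\mathrm{d}t$.

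Next I would invoke the ergodicity of the subcritical CIR process (the same fact used in the proof for $\wh{b}_T$, valid because $b<0$ and $X_t$ converges in distribution to the Gamma law described just before Theorem~\ref{T2}) to conclude that
\begin{equation*}
\frac{\langle m\rangle_T}{T} \xrightarrow{a.s.} \dE\!\left[X/C^2\right] \qquad \text{and} \qquad \frac{1}{T}\int_0^T \frac{1}{C_t}\,\mathrm{d}t \xrightarrow{a.s.} \dE\!\left[1/C\right],
\end{equation*}
both limits being finite because $C_t\geq c>0$ keeps the integrands bounded, so no integrability issue at zero arises even when $0<a<2$.

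Then the central limit theorem for continuous martingales with deterministic bracket limit yields $m_T/\sqrt{T}\xrightarrow{\cL} \cN(0,\dE[X/C^2])$, and Slutsky's lemma combines this with the a.s.\ convergence of the denominator to give the announced CLT with asymptotic variance $4\,\dE[X/C^2]\,(\dE[1/C])^{-2}$. The only nontrivial point, which is implicit in the previous proof as well, is verifying the Lindeberg-type condition needed for the martingale CLT; here the pathwise bound $\sqrt{X_t}/C_t\leq \sqrt{X_t}/c$ and the integrability of polynomial moments of $X$ under the stationary Gamma distribution make the condition automatic, so I would either cite this fact from the ergodic machinery used throughout the paper or check it directly via Rebolledo's theorem. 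No other obstacle should arise — the proof is a direct analogue of the one given for $\wh{b}_T$ with the roles of $1/C$ and $X^2/C$ exchanged.
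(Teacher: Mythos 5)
Your proposal follows exactly the paper's argument: the same decomposition $\sqrt{T}(\wh{a}_T-a)=2\left(\frac{1}{T}\int_0^T\frac{1}{C_t}\,\mathrm{d}t\right)^{-1}\frac{m_T}{\sqrt{T}}$ with $m_T=\int_0^T\frac{\sqrt{X_t}}{C_t}\,\mathrm{d}B_t$, the same ergodic limits for the bracket and the denominator, and the same combination of the martingale CLT with Slutsky's lemma. The extra remarks on the finiteness of the limits (thanks to $C_t\geq c>0$) and on the Lindeberg-type condition are sound and only make explicit what the paper leaves implicit.
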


\begin{proof}
It works as in the previous proof. One can observe that $$\sqrt{T}\left(\wh{a}_T-a\right) = 2 \left(\frac{1}{T}\int_0^T{\frac{1}{C_t} \, \mathrm{d}t}\right)^{-1} \frac{m_T}{\sqrt{T}}$$
where $m_T$ is a martingale term given by
\begin{equation*}\label{m}m_T=\int_0^T \frac{\sqrt{X_t}}{C_t} \, \mathrm{d}B_t \hspace{1cm} \text{ and} \hspace{1cm} \left\langle m\right\rangle_T=\int_0^T \frac{X_t}{C_t^2} \, \mathrm{d}t .
\end{equation*} Thus, for $T$ going to infinity,
\begin{equation}\label{mc}
\frac{\left\langle m\right\rangle_T}{T} \xrightarrow{a.s.} \dE\left[X/C^2\right]
\end{equation}
which implies the following convergence in distribution 
\begin{equation}\label{loim} \frac{ m_T}{\sqrt{T}} \xrightarrow{\mathcal{L}} \mathcal{N}\left(0,\dE\left[X/C^2\right]\right).
\end{equation}
Finally, \eqref{loim} leads to \eqref{P2} thanks to the ergodicity of the process and Slutsky's lemma.
\end{proof}

\begin{prop}
Assume that $a>2$ is known and $b<0$. Then, the MLE of $b$ satisfies a CLT with a smaller asymptotic variance than the weighted least squares estimator.
\end{prop}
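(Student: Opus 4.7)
The plan is to recast the comparison as a Cauchy--Schwarz inequality for the limiting distribution. The MLE $\wc{b}_T$ has asymptotic variance $4/\dE[X]$ (with $\dE[X]=-a/b>0$), while the WLSE $\wh{b}_T$ has asymptotic variance $4\,\dE[X^3/C^2]/\left(\dE[X^2/C]\right)^2$ by Proposition~\ref{P1}. Consequently, the statement reduces to proving the inequality
\begin{equation*}
\dE[X]\cdot \dE\!\left[\tfrac{X^3}{C^2}\right] \;\geq\; \left(\dE\!\left[\tfrac{X^2}{C}\right]\right)^{2}.
\end{equation*}

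First I would write $X^2/C$ as the product $\sqrt{X}\cdot X^{3/2}/C$ and apply the Cauchy--Schwarz inequality in $L^2$ with respect to the law of the stationary random variable $X$. This gives
\begin{equation*}
\dE\!\left[\tfrac{X^2}{C}\right] \;=\; \dE\!\left[\sqrt{X}\cdot \tfrac{X^{3/2}}{C}\right] \;\leq\; \sqrt{\dE[X]}\,\sqrt{\dE\!\left[\tfrac{X^3}{C^2}\right]},
\end{equation*}
and squaring yields precisely the required inequality, whence the WLSE variance dominates the MLE variance. The existence of the relevant moments is ensured by the Gamma law of $X$, the assumption $a>2$ (needed so that $\dE[X]$ is what governs the MLE variance) and the fact that $c>0$ keeps $1/C$ bounded.

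Finally, to obtain that the inequality is strict (and not a mere equality), I would verify that the Cauchy--Schwarz equality case cannot occur: equality would force $X^{3/2}/C$ to be a.s.\ proportional to $\sqrt{X}$, i.e.\ $X/(X+c)$ to be a.s.\ constant, which is impossible since $c>0$ and $X$ has a non-degenerate $\Gamma(a/2,-b/2)$ distribution. The main obstacle is really only noticing the right factorization; after that, the proof is a one-line application of Cauchy--Schwarz together with a quick non-degeneracy check.
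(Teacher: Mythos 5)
Your proof is correct and uses exactly the same argument as the paper: the factorization $X^2/C=\sqrt{X}\cdot X^{3/2}/C$ followed by the Cauchy--Schwarz inequality. Your additional check that the equality case is excluded (since $X/(X+c)$ cannot be a.s.\ constant) is a small refinement the paper omits.
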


\begin{proof}
Using Cauchy-Schwarz Inequality, we notice that
$$\left(\dE\left[X^2/C\right]\right)^{2} = \left(\dE\left[\sqrt{X} \times X^{3/2}/C\right]\right)^{2} \leq \dE\left[X\right] \dE\left[X^3/C^2\right]$$
which immediately leads to the result.
\end{proof}

\begin{prop}
Assume that $a>2$ and $b<0$ is known. Then, the MLE of $a$ satisfies a CLT with a smaller asymptotic variance than the weighted least squares estimator.
\end{prop}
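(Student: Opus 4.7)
The plan is to mimic the Cauchy--Schwarz argument used in the previous proposition, but with a different factorization adapted to the expressions for $\wc{a}_T$ and $\wh{a}_T$. Recall that the MLE of $a$ has asymptotic variance $4/\dE[X^{-1}] = 4(a-2)/(-b)$, while by the preceding proposition the WLSE $\wh{a}_T$ has asymptotic variance $4 \, \dE[X/C^2] \bigl(\dE[1/C]\bigr)^{-2}$. Hence the claim reduces to proving the inequality
\begin{equation*}
\bigl(\dE[1/C]\bigr)^{2} \;\leq\; \dE[X^{-1}] \, \dE[X/C^{2}].
\end{equation*}

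To establish this, I would write $1/C$ as the product $(1/\sqrt{X}) \cdot (\sqrt{X}/C)$ and apply the Cauchy--Schwarz inequality to the pair $f = 1/\sqrt{X}$, $g = \sqrt{X}/C$ under the stationary law of $X$ (the Gamma $\Gamma(a/2,-b/2)$ distribution). This factorization is legitimate since $a>2$ guarantees $\dE[X^{-1}]<\infty$, and $C = X+c>0$ with $c>0$ ensures $\dE[X/C^{2}]<\infty$ (actually $\leq 1/c$). Cauchy--Schwarz then yields exactly
\begin{equation*}
\dE[1/C] \;=\; \dE\!\left[\frac{1}{\sqrt{X}} \cdot \frac{\sqrt{X}}{C}\right] \;\leq\; \sqrt{\dE[1/X]} \, \sqrt{\dE[X/C^{2}]},
\end{equation*}
and squaring gives the desired inequality. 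Dividing by $\bigl(\dE[1/C]\bigr)^{2} \dE[X^{-1}]$ and multiplying by $4$ then shows $4/\dE[X^{-1}] \leq 4 \, \dE[X/C^{2}] \bigl(\dE[1/C]\bigr)^{-2}$, which is the comparison of asymptotic variances.

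There is essentially no obstacle here: the entire argument is a one-line application of Cauchy--Schwarz, exactly parallel to the proof of the previous proposition (where the decomposition was $X^{2}/C = \sqrt{X} \cdot X^{3/2}/C$). The only point worth checking is the integrability condition $\dE[1/X]<\infty$, which is where the hypothesis $a>2$ enters, ensuring that the MLE asymptotic variance is finite and the comparison is meaningful.
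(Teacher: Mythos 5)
Your proof is correct and uses exactly the same argument as the paper: the Cauchy--Schwarz inequality applied to the factorization $1/C = X^{-1/2}\cdot X^{1/2}/C$, yielding $\left(\dE\left[1/C\right]\right)^2 \leq \dE\left[1/X\right]\dE\left[X/C^2\right]$. The additional remarks on integrability (where $a>2$ is used) are a welcome but minor elaboration of what the paper leaves implicit.
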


\begin{proof}
Using Cauchy-Schwarz Inequality, we notice that
$$\left(\dE\left[1/C\right]\right)^{2} = \left(\dE\left[X^{-1/2} \times X^{1/2}/C\right]\right)^{2} \leq \dE\left[1/X\right] \dE\left[X/C^2\right]$$
which immediately leads to the result.
\end{proof}

\begin{rem}
Thus, the weighted least squares estimator is less efficient than the MLE in the case where this later is easily manageable. This could seem to be contradictory to Remark 4.4 of \cite{Wei} which deals with the discrete-time counterpart of the process. In fact, they compare the weighted least squares with the conditional least squares estimator which does not coincide with the MLE.
\end{rem}

\begin{rem}
One could wonder how to choose which estimator to use, as the parameter $a$ is unknown. However, we suppose that we observe the whole trajectory of the process over the time interval $[0,T]$. Thus, if we are able to detect some local time at level zero, we know that $a<2$ and we should use the WLSE instead of the MLE.
\end{rem}
 
\section{Technical Lemmas}

In order to prove Theorem~\ref{T1}, we need to investigate the almost sure convergence of all the integrals involved in the definition of the estimators. Overbeck recalls in Lemma 3\textit{(i)} of \cite{Ov} that, for $T$ going to infinity, $X_T$ converges in distribution to $X$ with Gamma $\Gamma(a/2,-b/2)$ distribution, whose probability density function is given by
\begin{equation}\label{dens}
f(x)=\left(\Gamma(a/2)\right)^{-1} \left(-b/2\right)^{a/2} x^{a/2-1} e^{xb/2} \mathbf{1}_{x>0}.
\end{equation}
Thus, by Lemma 3\textit{(ii)} of \cite{Ov}, for $T$ going to infinity,
$$\frac{1}{T}\int_0^T g\left(X_t\right) \, \mathrm{d}t \xrightarrow{a.s.} \dE\left[g(X)\right] = \int_0^{+\infty} g(x) f(x) \, \mathrm{d}x.$$ for any function $g$ such that the right-hand side exists.

By an integration by part, we easily show the two following properties of the incomplete gamma function, which will be very useful in the following proof:
\begin{equation}\label{R1}
\Gamma(\alpha+1,x)=x^{\alpha} e^{-x} + \alpha \Gamma\left(\alpha,x\right)
\end{equation}
and
\begin{equation}\label{R2}
\Gamma(\alpha+2,x)=x^{\alpha} e^{-x} \left(x+\alpha+1\right) + \alpha \left(\alpha+1\right) \Gamma\left(\alpha,x\right).
\end{equation}
We are now able to prove the following lemmas. The three first points give us the almost sure limit of $T \Gamma_T^{-1}$ as $T$ goes to infinity, while the remaining deals with the increasing process of the right-hand side two-dimensional martingale of (\ref{Est}).
\begin{lem} With $\psi_c$ given by \eqref{defpsi}, we have that
\begin{enumerate}[label={(\roman*)},ref={(\roman*)}]
 \item \label{L1} $\dE \left[ 1/C\right]= \frac{\psi_c}{c}$.
 \item \label{L2}
$ \dE \left[ X/C\right]=1-\psi_c$.
 \item \label{L3} $\dE \left[ X^2/C\right]=c \left(\psi_c-1 \right) -\frac{a}{b}$.
 \item \label{L4}
$\dE\left[X/C^2\right]=\frac{a}{2c}\psi_c + \frac{b}{2} \left(1-\psi_c\right).$
\item \label{L5}
$\dE\left[X^2/C^2\right]=\frac{1}{2}\left(\left(a+2-bc\right)\left(1-\psi_c\right)-a\right)$.
\item \label{L6}
$\dE\left[X^3/C^2\right]=\frac{c}{2}\, \left(a+4-b\right) \psi_c -2c-\frac{bc^2}{2}-\frac{a}{b}.$
\end{enumerate}
\end{lem}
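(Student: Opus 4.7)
The plan is to reduce the entire list to a single direct integration, namely part (i), plus one auxiliary identity for $\dE[1/C^2]$, with everything else following from the algebraic identity $X = C - c$ and the known moments of $X \sim \Gamma(a/2, -b/2)$.

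First I would establish (i) by direct computation. Writing $\lambda := -b/2$, the density \eqref{dens} becomes $f(x) = \lambda^{a/2} x^{a/2-1} e^{-\lambda x}/\Gamma(a/2)$. I would use the representation $\frac{1}{x+c} = \int_0^\infty e^{-t(x+c)}\, dt$ and the Laplace transform $\dE[e^{-tX}] = (\lambda/(\lambda+t))^{a/2}$ to obtain
\begin{equation*}
\dE[1/C] = \lambda^{a/2} \int_0^\infty \frac{e^{-tc}}{(\lambda+t)^{a/2}}\, dt.
\end{equation*}
The substitution $s = c(\lambda+t)$ turns this into $\lambda^{a/2} c^{a/2-1} e^{\lambda c} \int_{\lambda c}^\infty s^{-a/2} e^{-s}\, ds$, and recognising the integral as $\Gamma(1-a/2, \lambda c)$ gives $\dE[1/C] = \psi_c/c$ by the definition \eqref{defpsi}.

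For (ii) and (iii) I would exploit $X = C - c$. The identity $X/C = 1 - c/C$ yields $\dE[X/C] = 1 - \psi_c$; writing $X^2/C = X - cX/C$ and using $\dE[X] = -a/b$ yields $\dE[X^2/C] = -a/b + c(\psi_c - 1)$.

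For (iv)-(vi), the missing ingredient is $\dE[1/C^2]$, which I would obtain by differentiating (i) in the parameter $c$, since $\frac{d}{dc}\dE[1/C] = -\dE[1/C^2]$. A direct differentiation of \eqref{defpsi}, using $\frac{d}{dy}\Gamma(\alpha, y) = -y^{\alpha-1} e^{-y}$, collapses the mess to $\psi_c' = \frac{a\psi_c}{2c} + \frac{b}{2}(1-\psi_c)$, whence $\dE[1/C^2] = \psi_c/c^2 - \psi_c'/c$. Then the polynomial trick closes out the proof: $X/C^2 = 1/C - c/C^2$ gives (iv); $X^2/C^2 = 1 - 2c/C + c^2/C^2$ gives (v); and $X^3/C^2 = C - 3c + 3c^2/C - c^3/C^2$ gives (vi) after substituting $\dE[C] = c - a/b$, $\dE[1/C] = \psi_c/c$, and the formula for $\dE[1/C^2]$.

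The main obstacle is the very first substitution: the incomplete gamma function must be extracted cleanly from the Laplace-transform integral, with all the powers of $c$, $\lambda$, and the factor $e^{\lambda c}$ tracked carefully to recover the exact form of $\psi_c$ in \eqref{defpsi}. Once (i) is in hand, the remainder is purely algebraic and uses no further analysis — in particular, the recurrences \eqref{R1} and \eqref{R2} are needed only if one chooses to compute $\dE[1/C^2]$ by a second direct integration rather than by differentiating $\psi_c/c$ in $c$.
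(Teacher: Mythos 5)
Your proof is correct in substance but follows a genuinely different route from the paper's. For (i) the paper simply quotes formula 3.38(10) of Gradshteyn--Ryzhik, whereas you rederive that formula via the Laplace-transform representation of $1/(x+c)$ and a change of variables; the bookkeeping does work out and yields exactly $\psi_c/c$. For (ii) the paper performs a second such integration combined with the recurrence \eqref{R1}, while your one-line identity $X/C=1-c/C$ gets there faster; (iii) is essentially identical in both. The real divergence is in (iv)--(vi): the paper integrates by parts in the defining integrals to obtain the reductions $\dE[X/C^2]=\tfrac{a}{2}\dE[1/C]+\tfrac{b}{2}\dE[X/C]$ and $\dE[X^2/C^2]=\tfrac{a+2}{2}\dE[X/C]+\tfrac{b}{2}\dE[X^2/C]$, whereas you introduce the auxiliary quantity $\dE[1/C^2]=-\tfrac{d}{dc}\dE[1/C]$ and expand $X^k/C^2$ binomially in $X=C-c$. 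Your differentiation identity $\psi_c'=\tfrac{a}{2c}\psi_c+\tfrac{b}{2}(1-\psi_c)$ is correct (it is in fact precisely $\dE[X/C^2]$), and the interchange of $d/dc$ with the expectation is justified by domination on compact $c$-intervals in $(0,\infty)$. Your route is self-contained and arguably cleaner; the paper's buys the same answers with one integration by parts and no differentiation under the integral sign.

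One caveat on (vi): carry your computation to the end before asserting it ``gives (vi)''. With $\dE[C]=c-a/b$, $\dE[1/C]=\psi_c/c$ and $\dE[1/C^2]=\psi_c/c^2-\psi_c'/c$, your expansion $X^3/C^2=C-3c+3c^2/C-c^3/C^2$ yields
\begin{equation*}
\dE\left[X^3/C^2\right]=\frac{c}{2}\left(a+4-bc\right)\psi_c-2c+\frac{bc^2}{2}-\frac{a}{b},
\end{equation*}
which is not the displayed formula of the lemma (that one has $a+4-b$ in place of $a+4-bc$ and $-bc^2/2$ in place of $+bc^2/2$). The paper's own reduction $\dE[X^3/C^2]=\dE[X]-2c\,\dE[X^2/C^2]-c^2\,\dE[X/C^2]$, fed with its (iv) and (v), produces the same corrected expression, and a direct check in the exponential case $a=2$, $b=-2$, $c=1$ (where $\dE[X^3/(X+1)^2]=4\psi_1-2$) confirms it. So your method is sound, but it does not reproduce the statement as printed; it exposes a typo in item (vi) (which propagates to the entry $L_{22}$ of the matrix $L$ in Theorem 2.2), and your write-up should say so rather than claim agreement.
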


\begin{proof}
\textit{(i)} We have
\begin{equation}\label{E1}
 \dE \left[ 1/C\right]=\int_0^{+\infty} \frac{1}{x+c} f(x) \, \mathrm{d}x,
\end{equation}
where $f$ is given by \eqref{dens}.
  Formula 3.38(10) of \cite{GR} gives that
 \begin{equation*}
 \int_0^{+\infty}\frac{1}{x+c} x^{a/2-1} e^{xb/2} \, \mathrm{d}x = c^{a/2-1} e^{-bc/2} \Gamma(a/2) \Gamma(1-a/2,-bc/2),
 \end{equation*}
which leads to
 \begin{equation*}
\dE \left[ 1/C\right] =  \frac{1}{c} \left(-\frac{bc}{2}\right)^{a/2} e^{-bc/2} \, \Gamma\left(1-a/2,-bc/2\right)
 \end{equation*}
 and ensures the announced result.
 
\textit{(ii)} As in the previous proof, we have
\begin{equation}\label{E2}
\dE \left[ X/C\right]= \int_0^{+\infty} \frac{x}{x+c} f(x) \, \mathrm{d}x.
\end{equation}
By formula 3.38(10) of \cite{GR}, we know that
 \begin{equation}\label{E3}
 \int_0^{+\infty}\frac{1}{x+c} x^{a/2} e^{xb/2} \, \mathrm{d}x = c^{a/2} e^{-bc/2} \Gamma(a/2+1) \Gamma(-a/2,-bc/2).
 \end{equation}
 With formula (\ref{R1}), we easily obtain that 
 \begin{equation}\label{E6}
 \Gamma(-a/2,-bc/2)= \left(-\frac{2}{a}\right)\left(\Gamma\left(1-a/2,-bc/2\right)- \left(-\frac{bc}{2}\right)^{-a/2}e^{bc/2} \right).
\end{equation}  
Combining (\ref{E2}), (\ref{E3}), (\ref{E6})  and the fact that $\Gamma(a/2+1)=a/2 \times \Gamma(a/2)$, we deduce the announced result.

\textit{(iii)}
We have
\begin{equation*}
\dE\left[\frac{X^2}{C}\right] = \dE\left[ \frac{\left(X+c-c\right)^2}{X+c}\right] =  \dE\left[ X\right]-c+ \, c^2 \, \dE\left[ \frac{1}{C}\right],
\end{equation*}
and we conclude using \ref{L1} and the fact that $\dE\left[X\right]=-a/b$.

\textit{(iv)} By the very definition of $f$ given by \eqref{dens}, we have
\begin{equation}\label{E8}
\dE\left[X/C^2\right]=\int_0^{+\infty} \frac{x}{\left(x+c\right)^2} f(x) \, \mathrm{d}x =  \frac{\left(-b/2\right)^{a/2} }{\Gamma(a/2)} \int_0^{+\infty}\frac{x^{a/2}}{\left(x+c\right)^2} \,  e^{xb/2} \, \mathrm{d}x .
\end{equation}
Integrating the right-hand side of (\ref{E8}) by part, we obtain that
\begin{equation*}
\dE\left[X/C^2\right]= \frac{\left(-b/2\right)^{a/2} }{\Gamma(a/2)} \left[\frac{a}{2}\int_0^{+\infty}\frac{x^{a/2-1}}{x+c} \,  e^{xb/2} \, \mathrm{d}x  + \frac{b}{2}\int_0^{+\infty}\frac{x^{a/2}}{x+c} \,  e^{xb/2} \, \mathrm{d}x \right]. 
\end{equation*}
We have already computed both integrals in the proofs of respectively \ref{L1} and \ref{L2}, which leads to 
\begin{equation}\label{E9}
\dE\left[X/C^2\right]=\frac{a}{2} \, \dE\left[1/C\right]+ \frac{b}{2}  \,  \dE\left[X/C\right]= \frac{a}{2c} \, \psi_c + \frac{b}{2}  \, \left(1-\psi_c\right).
\end{equation}

\textit{(v)} Integrating by parts and using \ref{L3} and \ref{L4},
\begin{equation*}
\begin{array}{lcl}
\displaystyle \dE\left[X^2/C^2\right] &=& \displaystyle \frac{\left(-b/2\right)^{a/2} }{\Gamma(a/2)} \int_0^{+\infty}\frac{x^{a/2+1}}{\left(x+c\right)^2} \,  e^{xb/2} \, \mathrm{d}x \\
& = & \displaystyle \frac{\left(-b/2\right)^{a/2} }{\Gamma(a/2)} \left[ \frac{a+2}{2} \int_0^{+\infty}\frac{x^{a/2}}{x+c} \,  e^{xb/2} \, \mathrm{d}x + \frac{b}{2} \int_0^{+\infty}\frac{x^{a/2+1}}{x+c} \,  e^{xb/2} \, \mathrm{d}x \right]\\
&=&\displaystyle \left(\left(a+2\right) \, \dE\left[X/C\right] + b \, \dE\left[X^2/C\right]\right)/2 \\
&=&\displaystyle  \left(\left(a+2\right) \left(1-\psi_c\right) + b \left( c\left(\psi_c-1\right) -\frac{a}{b}\right)\right)/2 .
\end{array}
\end{equation*}
\textit{(vi)} Noticing that $X^3= X \left(X+c\right)^2 - 2c X^2 -c^2 X$, we obtain that
$$\dE\left[X^3/C^2\right]= \dE\left[X\right]-2c \,  \dE\left[X^2/C^2\right]- c^2 \, \dE\left[X/C^2\right]$$ and we conclude using \ref{L4} and \ref{L5}.

\end{proof}


\section{Proof of the strong Consistency}
We are now in the position to prove Theorem \ref{T1}. We first rewrite (\ref{Est}) using (\ref{heston}):

\begin{equation}\label{E14}
\wh{\theta}_T
= \theta +\begin{pmatrix}
\Gamma_T^{-1} & 0 \\
0 & \Gamma_T^{-1}
\end{pmatrix}\begin{pmatrix}
M_T\\
N_T
\end{pmatrix},
\end{equation}
where $M_T$ and $N_T$ are martingales respectively given by 
\begin{equation*}
M_T= \begin{pmatrix}
\displaystyle \int_0^T{\frac{2 \sqrt{X_t}}{C_t}  \, \mathrm{d}B_t}\\
\displaystyle \int_0^T{\frac{ 2 \sqrt{X_t}X_t}{C_t} \, \mathrm{d}B_t}
\end{pmatrix}
\hspace{1cm} \text{ and } \hspace{1cm}
N_T= \begin{pmatrix}
\displaystyle \int_0^T{\frac{2 \sqrt{X_t}}{C_t}  \, \mathrm{d}\wt{B}_t}\\
\displaystyle \int_0^T{\frac{ 2 \sqrt{X_t}X_t}{C_t} \, \mathrm{d}\wt{B}_t}
\end{pmatrix}
\end{equation*}
with $\mathrm{d}\wt{B}_t= \rho \,\mathrm{d}B_t + \sqrt{1-\rho^2} \, \mathrm{d}W_t$.
We denote by $\cM_T$ the martingale $\cM_T=(
M_T,N_T)$.
As $\left\langle \mathrm{d}B_t, \mathrm{d}\wt{B}_t \right\rangle = \rho \mathrm{d}t$, we easily obtain that the increasing process of $\cM_T$ is given by
\begin{equation}
\left\langle \cM\right\rangle_T = \begin{pmatrix}
\left\langle M \right\rangle_T & \rho \left\langle M \right\rangle_T \\
\rho \left\langle M \right\rangle_T  & \left\langle M \right\rangle_T 
\end{pmatrix}.
\end{equation}

\begin{proof}[Proof of Theorem \ref{T1}]
First of all, we have
$$\frac{1}{T^2} \det \Gamma_T = \frac{1}{T}\int_0^T{\frac{1}{C_t} \, \mathrm{d}t} \times \frac{1}{T} \int_0^T{\frac{X_t^2}{C_t} \, \mathrm{d}t}- \left(\frac{1}{T}\int_0^T{\frac{X_t}{C_t} \, \mathrm{d}t}\right)^2.$$
Thus, as the process is ergodic, we obtain for $T$ going to infinity,
\begin{equation}
 \frac{1}{T^2} \det \Gamma_T \xrightarrow{a.s.} \, \dE\left[1/C\right] \dE\left[X^2/C\right] - \left(\dE\left[X/C\right]\right)^2
 \end{equation}
and
\begin{equation}\label{E15}
T \Gamma_T^{-1} \xrightarrow{a.s.} A
\end{equation}
where $A$ is given by
\begin{equation}\label{E16}
A=\left(\dE \left[ C\right]\dE \left[ 1/C\right]-1\right)^{-1}\begin{pmatrix}
 \dE \left[ X^2/C\right] & -\dE \left[ X/C\right] \\
 -\dE \left[ X/C\right] & \dE \left[ 1/C\right]
\end{pmatrix}.
\end{equation}
A straightforward application of Lemmas 4.1 \ref{L1} to \ref{L3} gives that
\begin{equation*}
A= \frac{1}{\psi_c \left(1-\frac{a}{bc} \right) -1} \begin{pmatrix}
c \left(\psi_c-1 \right) -\frac{a}{b} & \psi_c-1 \\
\psi_c-1 & \frac{\psi_c}{c}\\
\end{pmatrix} .
\end{equation*}
Besides, the martingale $M_T$ satisfies for $T$ going to infinity
\begin{equation}\label{E17}
\frac{M_T}{T} \xrightarrow{a.s.} 0.
\end{equation}
As a matter of fact, by convergences (\ref{nc}) and (\ref{mc}), we know that a.s. $\left\langle n\right\rangle_T = \cO\left(T\right)$ and $\left\langle m\right\rangle_T= \cO\left(T\right)$. It ensures that for $T$ going to infinity, 
\begin{equation*}
\frac{n_T}{T} \xrightarrow{a.s.} 0 \hspace{1cm} \text{ and } \hspace{1cm} \frac{m_T}{T} \xrightarrow{a.s.} 0.
\end{equation*}
As $N_T$ and $M_T$ share the same increasing process, this result remains true by replacing $M_T$ by $N_T$.
Finally, the almost sure convergence \eqref{cvps_hest} follows from (\ref{E14}), (\ref{E15}) and (\ref{E17}).

\end{proof}

\section{Proof of the asymptotic normality}

\begin{proof}[Proof of Theorem \ref{T2}]
First of all, we deduce from (\ref{E14}) that
\begin{equation}\label{six}
 \sqrt{T} \left(
\wh{\theta}_T
-\theta\right) =\begin{pmatrix}
T\Gamma_T^{-1} & 0 \\
0 & T\Gamma_T^{-1}
\end{pmatrix}\begin{pmatrix}
M_T/ \sqrt{T}\\
N_T/\sqrt{T}
\end{pmatrix},
\end{equation}
We already saw that $T \Gamma_T^{-1}$ converges almost surely as $T$ goes to infinity and its limit $A$ is given by (\ref{E16}).
We now have to establish the asymptotic normality of $\frac{M_T}{\sqrt{T}}$. The increasing process of $M_T$ is given by 
\begin{equation*}
\left\langle M \right\rangle_T = 4 \begin{pmatrix}
\left\langle m \right\rangle_T & \displaystyle \int_0^T \frac{X_t^2}{C_t^2} \, \mathrm{d}t \\
 \displaystyle \int_0^T \frac{X_t^2}{C_t^2} \, \mathrm{d}t & \left\langle n \right\rangle_T
\end{pmatrix}
\end{equation*}
where $\left\langle m \right\rangle_T$ and $\left\langle n \right\rangle_T$ are respectively given by \eqref{mc} and \eqref{nc}.
 Thus, by the ergodicity of the process, we obtain that
\begin{equation*}
\frac{\left\langle M \right\rangle_T}{T} \xrightarrow{a.s.} 4 L 
\hspace{1cm} \text{where} \hspace{1cm}
L=\begin{pmatrix}
\dE \left[ X/C^2\right] & \dE \left[ X^2/C^2\right] \\
\dE \left[ X^2/C^2\right] & \dE \left[ X^3/C^2\right]
\end{pmatrix}.
\end{equation*}
As a straightforward consequence of Lemmas 4.1 \ref{L4} to \ref{L6}, we obtain that 
 \begin{equation*}L = \frac{1}{2} \begin{pmatrix}
\displaystyle \frac{a}{c} \, \psi_c + b \left(1-\psi_c\right)& \displaystyle \left(a+2-bc\right) \, \left(1-\psi_c\right) - a\\
\displaystyle \left(a+2-bc\right) \, \left(1-\psi_c\right) - a & \displaystyle \psi_c c\, \left(a+4-b\right) -4c-bc^2-\frac{2a}{b}
\end{pmatrix}.
\end{equation*}
 We easily obtain the following a.s. convergence 
\begin{equation*}
\frac{\left\langle \cM \right\rangle_T}{T} \xrightarrow{a.s.} 4 \cL 
\end{equation*}
 where $\cL$ is a block matrix given by
 \begin{equation*}
 \cL= \begin{pmatrix}
 L & \rho L \\
 \rho L & L 
 \end{pmatrix}.
 \end{equation*}and we deduce from the CLT for martingales that
 \begin{equation}\label{loiM}
 \frac{\cM_T}{\sqrt{T}} \xrightarrow{\cL} \cN\left(0,4 \cL\right),
 \end{equation}
Finally, the asymptotic normality \eqref{TCL_hest} follows from \eqref{six} and \eqref{loiM} together with Slutsky's Lemma.
\end{proof}


\section{Numerical simulations}
The efficient discretization of the CIR process is a challenging question, see for example \cite{And} and \cite{Alf}.
We choose to implement the QE-algorithm based on quadratic-exponential approximations proposed in \cite{And}. Andersen introduced this algorithm to deal with the case $a<2$, for which common discretization schemes are not accurate. 
\subsection{Asymptotic behavior for $c=1$}
The two following figures illustrate our main results (strong consistency and asymptotic normality) in the case $a=1$ and $b=-2$, with the weighting parameter $c=1$. The red curves in the second figure displays the standard normal distribution. 
\begin{figure}[!ht]
\includegraphics[width=10cm, height=7cm]{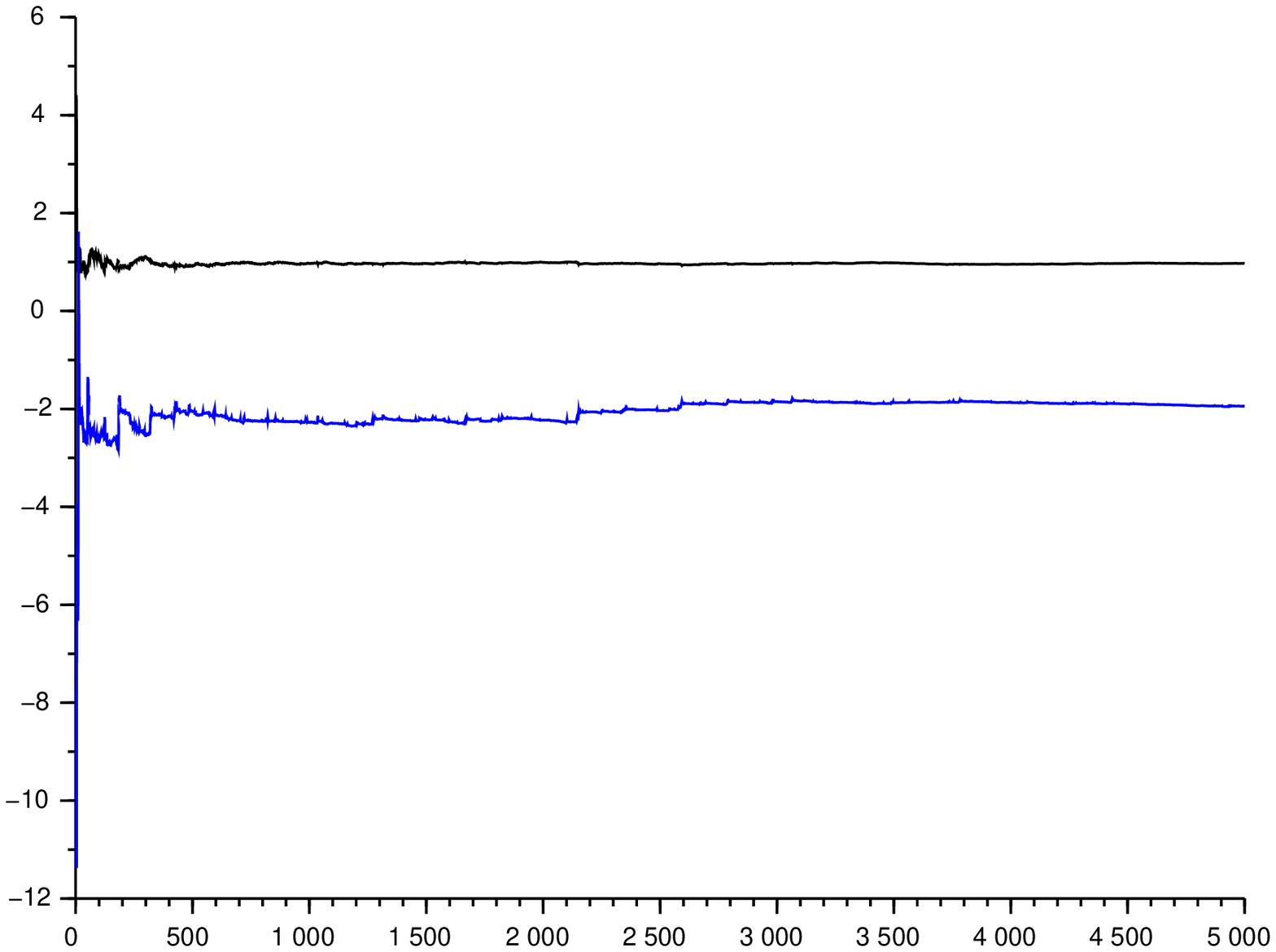}
\caption{strong consistency: $(\wh{a}_T)$ in black and $(\wh{b}_T)$ in blue.}
\label{ft1}
\end{figure}

\begin{figure}[!ht]
\includegraphics[width=10cm, height=7cm]{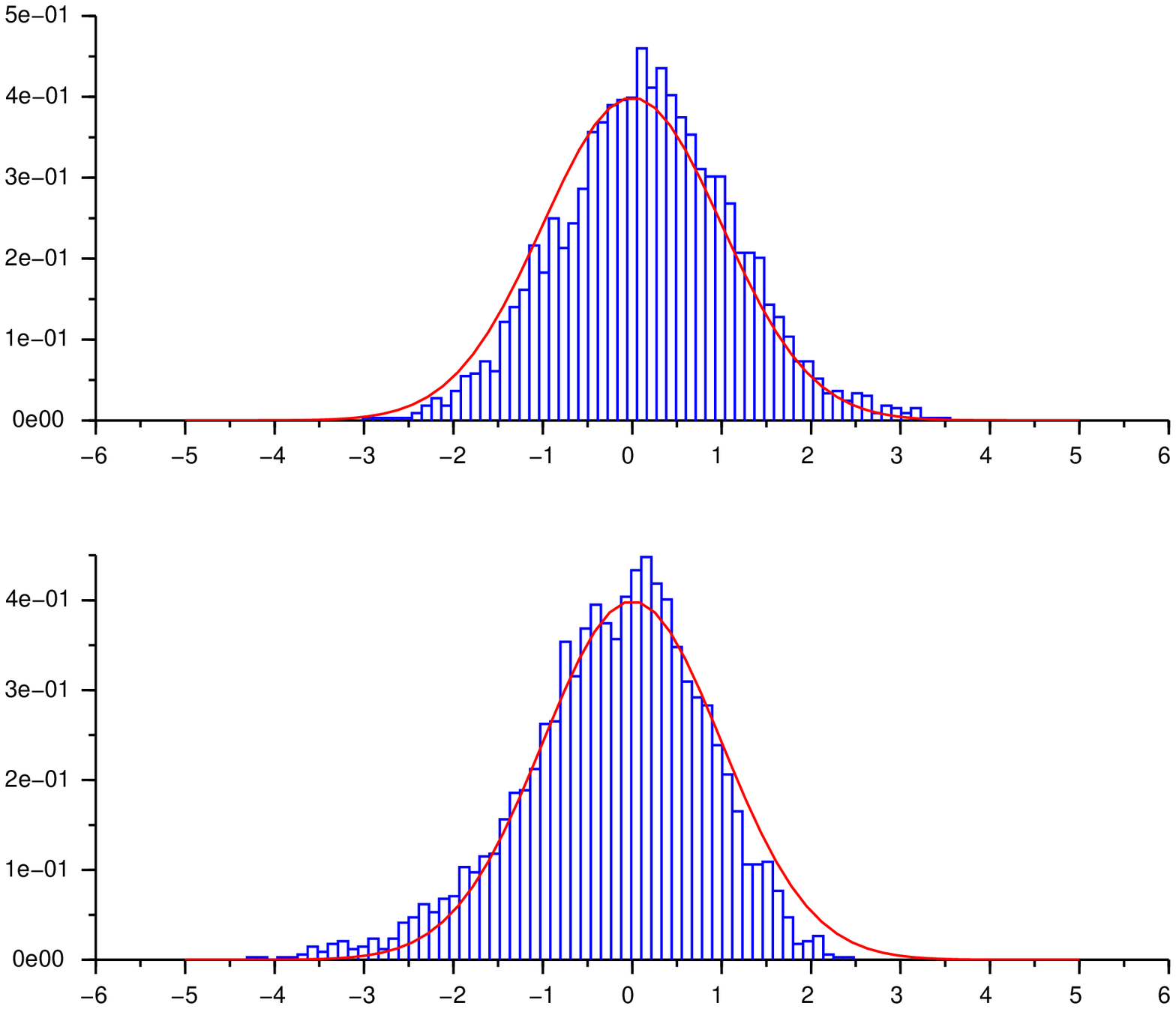}
\caption{Histograms of $3000$ outcomes of $\sqrt{T/4\sigma_{11}} \left(\wh{a}_T-a\right)$ and $\sqrt{T/4\sigma_{22}} (\wh{b}_T-b)$ at time $T=70$ . }
\label{ft2}
\end{figure}

\FloatBarrier

\subsection{Choice of the constant $c$}
We have chosen to introduce a constant $c$ in our weighting, instead of only considering the case $c=1$ (as done in the discrete-time case in \cite{Wei}) with the aim of lowering the variance of the estimators. However, this raises the question of the optimal choice of the constant $c$, which depends on the values of parameters $a$ and $b$. We set $a=1$ and $b=-4$ and simulate $500$ trajectories of the process over the time interval $[0,50]$. We compute the empirical variance of the estimators given by each trajectory for $c$ varying between $10^{-10}$ and $1$. It appears that one should choose a small value of $c$.  The value should not be to small to avoid the growth illustrated by the second figure, which might however be a consequence of the discretized version of the CIR process we used. For $\wh{a}_T$, there is a significant difference (factor $5$) between the empirical variances obtained with $c=0.01$ and $c=1$. However, for $\wh{b}_T$ both empirical variances do not significantly differ.
\begin{figure}[!h]
\begin{minipage}[c]{.46\linewidth}
      \includegraphics[width=7cm, height=7cm]{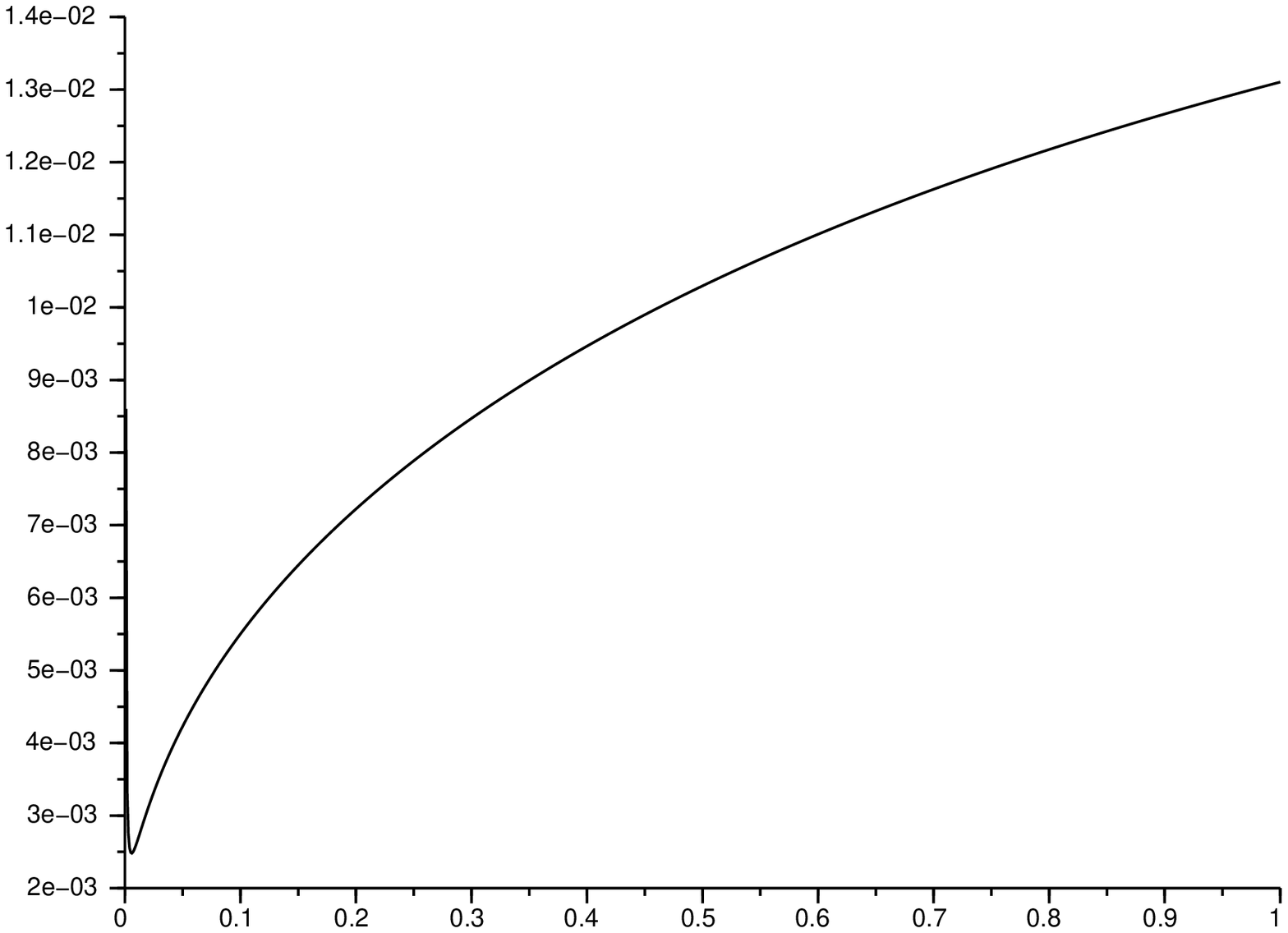}
      \caption{Variance of $\wh{a}_{50}$}
   \end{minipage} \hfill
   \begin{minipage}[c]{.46\linewidth}
      \includegraphics[width=7cm, height=7cm]{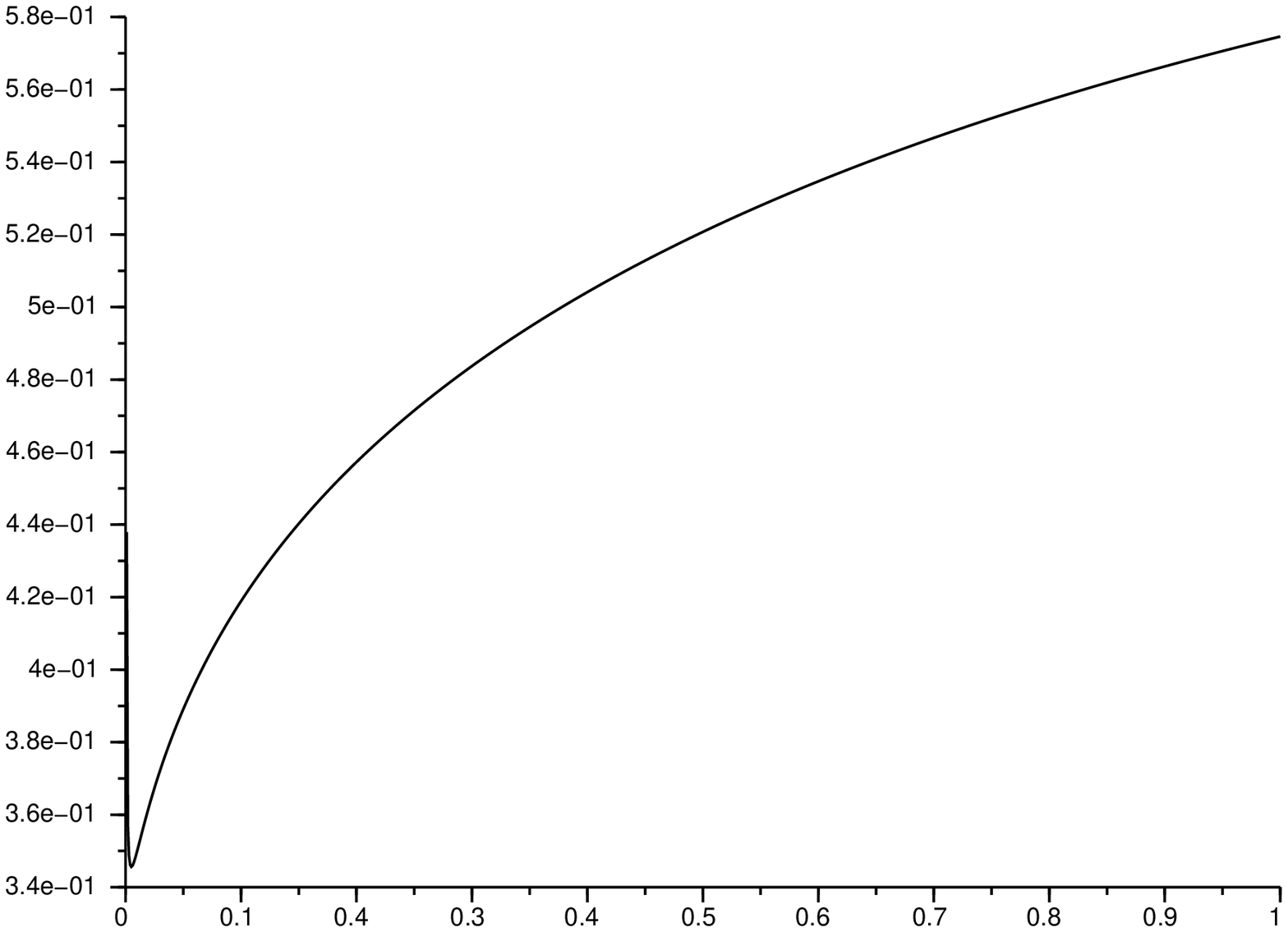}
      \caption{Variance of $\wh{b}_{50}$}
   \end{minipage}
   \unnumberedcaption{Empirical variances of the estimators}
\end{figure}

\begin{figure}[!h]
\begin{minipage}[c]{.46\linewidth}
      \includegraphics[width=7cm, height=7cm]{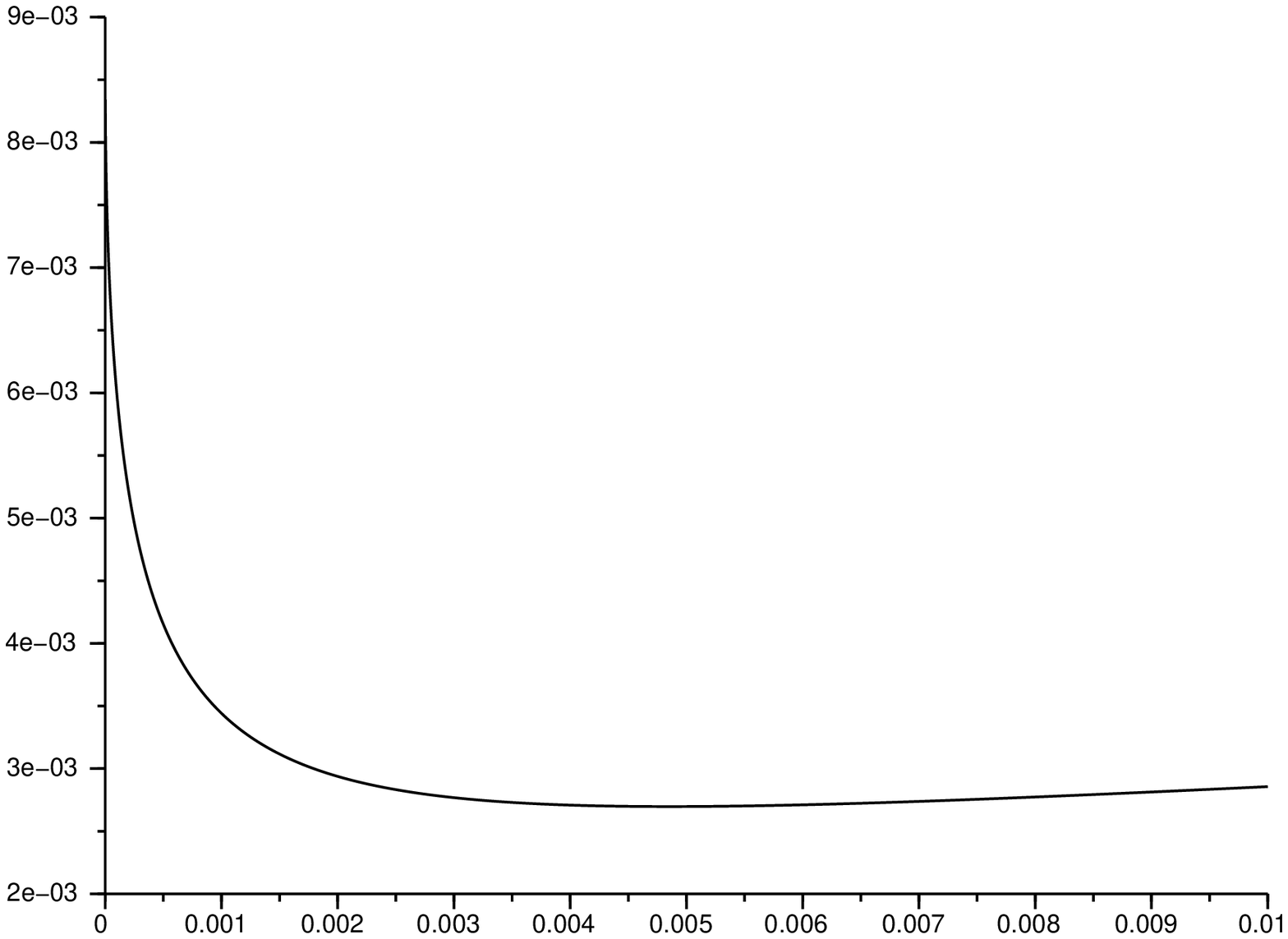}
      \caption{Variance of $\wh{a}_{50}$}
   \end{minipage} \hfill
   \begin{minipage}[c]{.46\linewidth}
      \includegraphics[width=7cm, height=7cm]{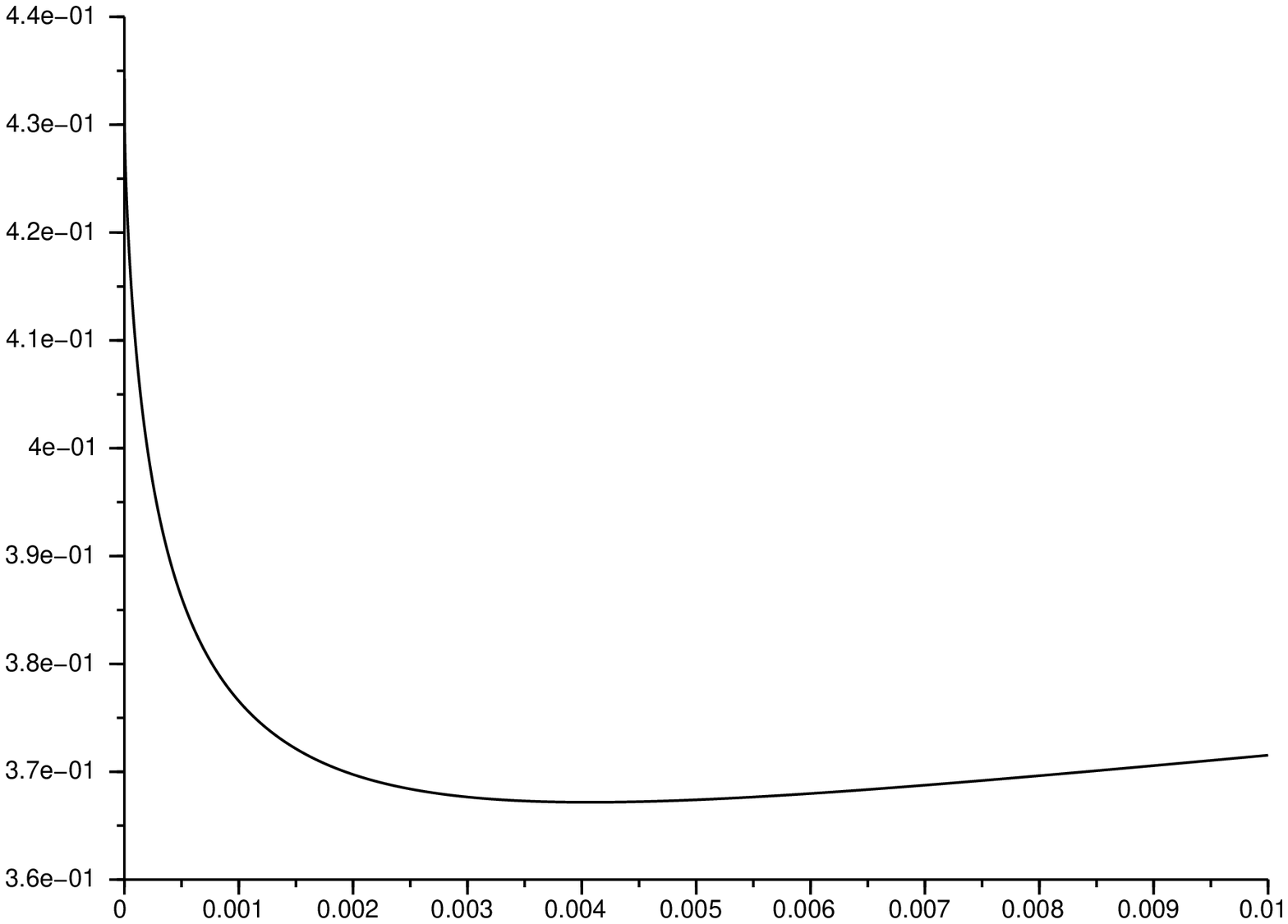}
      \caption{Variance of $\wh{b}_{50}$}
   \end{minipage}
   \unnumberedcaption{Empirical variances for very small values of $c$}
\end{figure}

\FloatBarrier

\bibliographystyle{acm}
\bibliography{biblio} 

\begin{thebibliography}{10}

\bibitem{AitK}
{\sc A{\"i}t-Sahalia, Y., and Kimmel, R.}
\newblock Maximum likelihood estimation of stochastic volatility models.
\newblock {\em Journal of Financial Economics 83}, 2 (feb 2007), 413--452.

\bibitem{Alf}
{\sc Alfonsi, A.}
\newblock High order discretization schemes for the {CIR} process: application
  to affine term structure and {H}eston models.
\newblock {\em Mathematics of Computation 79}, 269 (2010), 209--237.

\bibitem{And}
{\sc Anderson, L.}
\newblock Simple and efficient simulation of the heston stochastic volatility
  model.
\newblock {\em Journal of Computational Finance 11\/} (2008).

\bibitem{az}
{\sc Azencott, R., and Gadhyan, Y.}
\newblock Accurate parameter estimation for coupled stochastic dynamics.
\newblock {\em Discrete Contin. Dyn. Syst.}, Dynamical systems, differential
  equations and applications. 7th AIMS Conference, suppl. (2009), 44--53.

\bibitem{pap2}
{\sc Barczy, M., and Pap, G.}
\newblock Asymptotic properties of maximum-likelihood estimators for {H}eston
  models based on continuous time observations.
\newblock {\em Statistics 50}, 2 (2016), 389--417.

\bibitem{KAB2}
{\sc Ben~Alaya, M., and Kebaier, A.}
\newblock {Asymptotic Behavior of The Maximum Likelihood Estimator For Ergodic
  and Nonergodic Square-Root Diffusions}.
\newblock {\em Stochastic Analysis and Applications\/} (2013).

\bibitem{CIR}
{\sc Cox, J.~C., Ingersoll, Jr., J.~E., and Ross, S.~A.}
\newblock A theory of the term structure of interest rates.
\newblock {\em Econometrica 53}, 2 (1985), 385--407.

\bibitem{dRdC1}
{\sc du~Roy~de Chaumaray, M.}
\newblock Large deviations for the squared radial {O}rnstein-{U}hlenbeck
  process.
\newblock {\em Theory of Probability and its Applications\/} (2015), To appear.

\bibitem{fel}
{\sc Feller, W.}
\newblock Two singular diffusion problems.
\newblock {\em Ann. of Math. (2) 54\/} (1951), 173--182.

\bibitem{J1}
{\sc Forde, M., and Jacquier, A.}
\newblock The large-maturity smile for the {H}eston model.
\newblock {\em Finance Stoch. 15}, 4 (2011), 755--780.

\bibitem{J2}
{\sc Forde, M., Jacquier, A., and Lee, R.}
\newblock The small-time smile and term structure of implied volatility under
  the {H}eston model.
\newblock {\em SIAM J. Financial Math. 3}, 1 (2012), 690--708.

\bibitem{FT}
{\sc Fourni\'{e}, E., and Talay, D.}
\newblock Application de la statistique des diffusions \`a un mod\`ele de taux
  d'int\'{e}r\^et.
\newblock {\em Finance 12\/} (1991).

\bibitem{GJ2}
{\sc Gao, F., and Jiang, H.}
\newblock Moderate deviations for squared {O}rnstein-{U}hlenbeck process.
\newblock {\em Statistics \& Probability Letters 79}, 11 (2009), 1378--1386.

\bibitem{Gath}
{\sc Gatheral, J.}
\newblock {\em The Volatility Surface: A Practitioner's Guide}.
\newblock Wiley, 2006.

\bibitem{GR}
{\sc Gradshteyn, I.~S., and Ryzhik, I.~M.}
\newblock {\em Table of integrals, series, and products}.
\newblock Academic Press, 1980.

\bibitem{hest}
{\sc Heston, S.~L.}
\newblock A closed-form solution for options with stochastic volatility with
  applications to bond and currency options.
\newblock {\em Review of Financial Studies 6\/} (1993), 327--343.

\bibitem{J3}
{\sc Jacquier, A., and Roome, P.}
\newblock Large-maturity regimes of the {H}eston forward smile.
\newblock {\em Stochastic Process. Appl. 126}, 4 (2016), 1087--1123.

\bibitem{jan}
{\sc Janek, A., Kluge, T., Weron, R., and Wystup, U.}
\newblock F{X} smile in the {H}eston model.
\newblock In {\em Statistical tools for finance and insurance}. Springer,
  Heidelberg, 2011, pp.~133--162.

\bibitem{LL}
{\sc Lamberton, D., and Lapeyre, B.}
\newblock {\em Introduction au calcul stochastique appliqu\'e \`a la finance},
  second~ed.
\newblock Ellipses \'Edition Marketing, Paris, 1997.

\bibitem{Lee}
{\sc Lee, R.}
\newblock Option pricing by transform methods: Extensions, unification, and
  error control.
\newblock {\em Journal of Computational Finance 7\/}, 51--86.

\bibitem{Lewis}
{\sc Lewis, A.~L.}
\newblock {\em Option valuation under stochastic volatility}.
\newblock Finance Press, Newport Beach, CA, 2000.

\bibitem{Luke2}
{\sc Luke, Y.~L.}
\newblock {\em The Special Functions and their Approximations. {V}ol.~2}.
\newblock Academic Press, 1969.

\bibitem{Ov}
{\sc Overbeck, L.}
\newblock Estimation for continuous branching processes.
\newblock {\em Scandinavian Journal of Statistics. Theory and Applications
  25\/} (1998).

\bibitem{SteinStein}
{\sc Stein, E., and Stein, J.}
\newblock Stock price distributions with stochastic volatility: an analytic
  approach.
\newblock {\em Review of Financial Studies 4\/} (1991), 727--752.

\bibitem{Stein}
{\sc Stein, J.}
\newblock Overreactions in the options market.
\newblock {\em Journal of finance 44\/} (1989), 1011--1024.

\bibitem{Wei}
{\sc Wei, C., and Winnicki, J.}
\newblock Estimation of the means in the branching process with immigration.
\newblock {\em Annals of Statistics 18\/} (1990).

\bibitem{Z}
{\sc Zani, M.}
\newblock Large deviations for squared radial {O}rnstein–{U}hlenbeck
  processes.
\newblock {\em Stochastic Processes and their Applications 102}, 1 (2002), 25
  -- 42.

\end{thebibliography}

\end{document}